\theoremstyle{definition}
\theoremstyle{plain}
\newtheorem{theorem}{Theorem}[section]
\newtheorem{lemma}[theorem]{Lemma}
\theoremstyle{definition}
\newtheorem{definition}[theorem]{Definition}
\newtheorem{parag}[theorem]{}
\newtheorem{example}[theorem]{Example}
\newtheorem{notations}[theorem]{Notations}
\newtheorem{remarks}[theorem]{Remarks}
\theoremstyle{remark}
\newtheorem*{smallremark}{Remark}
\newenvironment{Enumerate}[1]%
{\begin{enumerate}\setlength{\itemsep}{#1}}{\end{enumerate}}
\newenvironment{enumerata}%
{\begin{enumerate}

}{\end{enumerate}}
\newcommand{\Spec}{	\operatorname{{\rm Spec}}}
\newcommand{\supp}{	\operatorname{{\rm supp}}}
\newcommand{\image}{	\operatorname{{\rm im}}}
\newcommand{\Frac}{	\operatorname{{\rm Frac}}}
\newcommand{\Char}{	\operatorname{{\rm char}}}
\newcommand{\Div}{	\operatorname{{\rm Div}}}
\renewcommand{\div}{	\operatorname{{\rm div}}}
\newcommand{\bZ}{\mathbf{Z}}
\newcommand{\setspec}[2]{\big\{\,#1\, \mid \,#2\, \big\}}
\newcommand{\Integ}{\ensuremath{\mathbb{Z}}}
\newcommand{\Nat}{\ensuremath{\mathbb{N}}}
\newcommand{\Comp}{\ensuremath{\mathbb{C}}}
\newcommand{\aff}{\ensuremath{\mathbb{A}}}
\newcommand{\proj}{\ensuremath{\mathbb{P}}}
\newcommand{\bk}{{\ensuremath{\rm \bf k}}}
\newcommand{\kk}[1]{\bk^{[#1]}}
\newcommand{\mgoth}{{\ensuremath{\mathfrak{m}}}}
\newcommand{\ngoth}{{\ensuremath{\mathfrak{n}}}}
\newcommand{\Aeul}{\EuScript{A}}
\newcommand{\Beul}{\EuScript{B}}
\newcommand{\Geul}{\EuScript{G}}
\newcommand{\Oeul}{\EuScript{O}}
\newcommand{\Reul}{\EuScript{R}}
\newcommand{\Seul}{\EuScript{S}}
\newcommand{\isom}{\cong}
\renewcommand{\epsilon}{\varepsilon}
\renewcommand{\phi}{\varphi}
\renewcommand{\emptyset}{\varnothing}
\newcommand{\barr}{\overline}
\newlength{\mylength}
\newcommand{\rien}[1]{}
\newcommand{\abh}{%
\raisebox{-1.2mm}{%
\setlength{\unitlength}{1mm}%
\begin{picture}(4.4,4.4)(-2.2,-2.2)
\put(0,0){\circle{2}}
\put(-2,0){\line(1,0){4}}
\end{picture}}}
\begin{document}
\renewcommand{\baselinestretch}{1.07}


\title[Generally rational polynomials]{Generally rational polynomials in two variables}

\author{Daniel Daigle}


\address{Department of Mathematics and Statistics\\ University of Ottawa\\ Ottawa, Canada\ \ K1N 6N5}
\email{ddaigle@uottawa.ca}

\thanks{Research supported by grant RGPIN/104976-2010 from NSERC Canada.}

{\renewcommand{\thefootnote}{}
\footnotetext{2010 \textit{Mathematics Subject Classification.}
Primary: 14R10.  Secondary: 14M20, 13F20, 14G17.}}

\keywords{Polynomial ring, rational curve,
field generator, generally rational polynomial, generically rational polynomial,
pencil, positive characteristic, purely inseparable extension.}

\begin{abstract} 
Let $\bk$ be an algebraically closed field.
A polynomial $F \in \bk[X,Y]$ is said to be {\it generally rational\/} if,
for almost all $\lambda \in \bk$, the curve ``$F=\lambda$'' is rational.
It is well known that, if  $\Char\bk=0$, $F$ is generally rational
iff there exists $G \in \bk(X,Y)$ such that $\bk(F,G)=\bk(X,Y)$.
We give analogous results valid in arbitrary characteristic.
\end{abstract}

\maketitle
  
\vfuzz=2pt


\section{Definitions and statements of results}
\label {Sec:Definitionsandstatementsofresults}

Given rings $R \subseteq S$, we write $S = R^{[n]}$ to indicate that $S$ is isomorphic, as an $R$-algebra,
to the polynomial algebra in $n$ variables over $R$.  If $L/K$ is a field extension, 
we write $L = K^{(n)}$ to indicate that $L$ is a purely transcendental extension of $K$, of transcendence 
degree $n$.  The field of fractions of a domain $R$ is denoted $\Frac R$.

\begin{definition}  \label {fp939223ekfjowe8}
Let $\bk$ be a field and $F \in A = \kk2$.
\begin{Enumerate}{1mm}

\item We define the phrase ``$A/(F)$ is $\bk$-rational'' to mean:
$$
\text{$F$ is an irreducible element of $A$ and the field of fractions of $A/(F)$ is $\bk^{(1)}$.}
$$

\item Suppose that $\bk$ is algebraically closed.
We say that $F$ is a {\it generally rational polynomial in $A$} if 
$A/(F-\lambda)$ is $\bk$-rational for almost all $\lambda \in \bk$,
where by ``almost all'' we mean ``all except possibly finitely many''.

\end{Enumerate}
\end{definition}

\begin{smallremark}
In \ref{o83020192djwoidja}, below, we show that if
$A/(F-\lambda)$ is $\bk$-rational for infinitely many $\lambda \in \bk$
then it is $\bk$-rational for almost all $\lambda \in \bk$.
\end{smallremark}

\begin{smallremark}
In the literature, generally rational polynomials are sometimes called ``generically rational polynomials'' or simply
``rational polynomials''.  The term ``generically rational polynomial'' is particularly misleading
since it suggests that the fiber of $\Spec A \to \Spec\bk[F]$ over the generic point of $\Spec\bk[F]$
is rational, which is not the intended meaning. (Note that the fiber over the generic point is rational
if and only if $F$ is a field generator, cf.\ \ref{hj24378rduq27}.)
\end{smallremark}

\begin{definition} \label {hj24378rduq27}
Let $\bk$ be a field and $F \in A = \kk2$.
We say that $F$ is a {\it field generator in $A$} if there exists $G \in \Frac A$ such
that $\bk(F,G)=\Frac A$.
If $G$ can be chosen in $A$, we say that $F$ is a {\it good\/} field generator in $A$;
if not, we say that $F$ is {\it bad}.
(Cf.\ \cite{JanThesis}, \cite{Rus:fg}, \cite{Rus:fg2}, \cite{Cassou-BadFG}.)
\end{definition}

It is known that if $\bk$ is an algebraically closed field of characteristic zero,
then $F \in \bk[X,Y]$ is a field generator if and only if it is a generally rational polynomial
(this is mentioned, for instance, in the introduction of \cite{MiySugie:GenRatPolys}).
In positive characteristic, one knows examples of generally rational polynomials which are
not field generators, but, apparently,
the precise relation between the two notions remains to be clarified.
It is the aim of the present paper to provide such clarification.
In order to do so, we propose the following

\begin{definition}
Let $\bk$ be a field and $F \in A = \kk2$.
We say that $F$ is a {\it pseudo field generator (PFG) in $A$} if there exists $G \in \Frac A$ such
that $\Frac A$ is a purely inseparable extension of $\bk(F,G)$.
If $G$ can be chosen in $A$, we say that $F$ is a {\it good\/} pseudo field generator in $A$;
if not, we say that $F$ is {\it bad}.
\end{definition}

\begin{remarks}
Let $\bk$ be a field and $F \in A = \kk2$.
\begin{enumerate}

\item
It is clear that ``field generator'' implies ``pseudo field generator'', and that the two notions are equivalent
if $\Char\bk=0$.

\item If $\Char\bk=p>0$ then the following hold:
\begin{itemize}
\item $F$ is a PFG in $A$ iff $F^p$ is a PFG in $A$.
\item $F$ is a good PFG in $A$ iff $F^p$ is a good PFG in $A$.
\end{itemize}

\end{enumerate}
\end{remarks}

Our aim is to prove
Theorems~\ref{o9823ry91c83yrquwei},  \ref{q9830732847ryt23rhf}, \ref{9230r9984tyqrawdc9r0fhd}
and \ref{020399f09fjcnEH1283} (the proofs are given in Section \ref{Sec:Proofs}).
Throughout, our base field is algebraically closed and of arbitrary characteristic.
Our results are well known in the case $\Char\bk=0$.
In fact, we recover the case $\Char\bk=0$ as a special case of our results.

\begin{theorem} \label {o9823ry91c83yrquwei}
Let $\bk$ be an algebraically closed field and let $A=\kk2$.
For $F \in A$, the following conditions are equivalent:
\begin{enumerata}

\item \label {982373253hhkj947}
 $F$ is a generally rational polynomial in $A$;

\item  \label {989kjbgcdsaw4567u9jdf}
$F$ is a pseudo field generator in $A$ and if $\Char\bk=p>0$ then $F \notin A^p$.

\end{enumerata}
\end{theorem}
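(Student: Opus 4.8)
The plan is to analyze everything through the generic fibre of the morphism $\pi\colon \Spec A \to \Spec\bk[F]$. Write $K=\bk(F)$ and $L=\Frac A=\bk(X,Y)$, so that the generic fibre $C_\eta$ is an affine curve over $K$ with function field $L$ and $\trdeg_\bk K = \trdeg_K L = 1$. Both conditions (a) and (b) will be recast as statements about three properties of $C_\eta$: geometric irreducibility, geometric reducedness, and geometric genus. The main point is the equivalence, for $F$ with $F\notin A^p$, between ``$F$ is a PFG'' and ``$C_\eta$ is geometrically integral of geometric genus $0$'', after which (a)$\Leftrightarrow$(b) follows by transporting information between $C_\eta$ and the closed fibres $F=\lambda$.

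First I would assemble a dictionary. (i) If $F$ is a PFG with witness $G$, then any element of $L$ that is separable algebraic over $K$ lies in $\bk(F,G)$ (being simultaneously separable and purely inseparable over it), hence lies in the relative algebraic closure of $K$ in the rational field $\bk(F)(G)$, which is $K$; thus $K$ is separably closed in $L$, i.e.\ $C_\eta$ is geometrically irreducible. Base changing the purely inseparable inclusion $\bk(F,G)\subseteq L$ to $\barr K$ and using that $\barr K$ is perfect, $\barr K\cdot L$ is purely inseparable over $\barr K(G)$ and therefore has genus $0$; so $C_\eta$ has geometric genus $0$. (ii) Since $\bk$ is perfect, $F\notin A^p$ is equivalent to $dF\ne 0$, hence to $\pi$ being generically smooth, hence to $C_\eta$ being geometrically reduced; note also that if $F=H^p$ then $F-\lambda=(H-\mu)^p$ (with $\mu^p=\lambda$) is never irreducible, so $F\in A^p$ immediately contradicts (a). Combining (i) and (ii): under $F\notin A^p$, ``$F$ is a PFG of geometric genus $0$'' forces $C_\eta$ geometrically integral of geometric genus $0$.

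Next I would connect $C_\eta$ to the closed fibres. The locus in $\Spec\bk[F]$ over which the fibre is geometrically integral is constructible (EGA IV), and the geometric genus of the fibres is a constructible function of the base point; since the base is the curve $\aff^1$, such a constructible set or function is determined, away from finitely many points, by its value at the generic point. Hence $C_\eta$ is geometrically integral of geometric genus $0$ if and only if almost all closed fibres $F=\lambda$ are geometrically integral of geometric genus $0$. Over the algebraically closed field $\bk$, a geometrically integral affine curve of geometric genus $0$ has function field $\bk^{(1)}$ and corresponds to an irreducible $F-\lambda$; that is, $A/(F-\lambda)$ is $\bk$-rational. Thus ``$C_\eta$ geometrically integral of geometric genus $0$'' is exactly condition (a). Together with the previous paragraph this already gives (b)$\Rightarrow$(a), and reduces (a)$\Rightarrow$(b) to the single implication: if $C_\eta$ is geometrically integral of geometric genus $0$ then $F$ is a PFG (the condition $F\notin A^p$ being supplied by geometric reducedness, and forced by (a) as noted).

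The heart of the matter is this last descent, which is also where positive characteristic genuinely intervenes. Let $K^{1/p^\infty}$ be the perfect closure of $K=\bk(F)\isom\bk^{(1)}$; as an algebraic extension of the $C_1$ field $\bk(t)$ it is again $C_1$ (Tsen). Base changing $C_\eta$ to $K^{1/p^\infty}$ preserves geometric integrality and normalizes to a smooth genus-$0$ curve over a perfect field, i.e.\ a conic, which by the $C_1$ property has a rational point and is therefore $\proj^1$; hence $L\cdot K^{1/p^\infty}$ is a rational function field over $K^{1/p^\infty}$. By finite generation this already holds over some finite subextension $K'=K^{1/p^n}$, say $L\cdot K'=K'(G_0)$. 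Taking $p^n$-th powers and using $(K')^{p^n}=K$ together with the Frobenius identity $(\sum_i \ell_i k_i)^{p^n}=\sum_i \ell_i^{p^n}k_i^{p^n}$ shows $(L\cdot K')^{p^n}\subseteq L$; setting $G:=G_0^{p^n}$ therefore gives $G\in L$ with $L^{p^n}\subseteq K(G)=\bk(F,G)$, i.e.\ $L$ is purely inseparable over $\bk(F,G)$ and $F$ is a PFG. When $\Char\bk=0$ this argument degenerates to $L=K(G)$, recovering that a generally rational polynomial is an honest field generator. The main obstacle is precisely this step: passing from geometric genus $0$ to an explicit witness $G\in L$ in characteristic $p$, for which the combination of Tsen's theorem over the perfect closure and the pure-inseparability bookkeeping seems essential.
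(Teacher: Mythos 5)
Your core descent step is correct and is in substance identical to the paper's own: pass to the perfect closure $K^{1/p^\infty}$ of $K=\bk(F)$, invoke Tsen--Lang to see it is $C_1$ (the paper's \ref{7832t8763trhxg0nfhn}), conclude from genus $0$ that the compositum with $L$ is a rational function field (the paper's \ref{823o87cry1i234eyhfdqwiue} and \eqref{9f92039c32cruf}), then descend to a finite level $K^{1/p^n}$ and take $p^n$-th powers to get a witness $G\in L$ --- the same Frobenius bookkeeping as the paper's step \eqref{923r9982r1hd18rnhdnh}. Your dictionary is also sound: a PFG witness forces $K$ to be separably closed in $L$ and forces geometric genus $0$, and (over the perfect field $\bk$) the condition $F\notin A^p$ is equivalent to geometric reducedness of the generic fibre.

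The genuine gap is the sentence asserting that ``the geometric genus of the fibres is a constructible function of the base point (EGA IV)''. Constructibility of the geometrically-integral locus is indeed available (\cite[9.7.7]{EGA4.3}), but generic constancy of the \emph{geometric} genus of the fibres is not an off-the-shelf EGA statement, and in characteristic $p$ it is exactly the delicate point of this whole subject: normalization does not commute with inseparable base change, so the genus of $L/K$ (even with $K$ algebraically closed in $L$) can strictly exceed the geometric genus of the generic fibre. Example \ref{pfupq293up9qwjd} (an exotic line) shows how treacherous this is: every closed fibre is $\aff^1$, of genus $0$, while $L/K$ has positive genus; only the genus over $\barr K$ is $0$. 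Your claim is in fact true, but both directions of your proof lean on it (generic-to-closed in (b)$\Rightarrow$(a), closed-to-generic in (a)$\Rightarrow$(b)), and it requires a real argument: spread out the normalization of the \emph{geometric} generic fibre over a finite, possibly inseparable, cover of $\aff^1$ and check that for almost all closed points the resulting smooth proper fibre maps birationally onto the corresponding fibre of $F$. This is precisely the work the paper does by other means: for closed-to-generic it uses the explicit parametrization degree bound \ref{uhh6e34qfdpc034}, the constructibility argument \ref{o83020192djwoidja}, base-change invariance \ref{f9032978y9187x4yriwedj}, and the translation-by-$\tau$ automorphism trick to show the geometric generic fibre is rational; for generic-to-closed it uses constancy of separable degree (\ref{1po239092up938rnhfjsdu48}) together with Matsusaka (\ref{Matsudp23rwije}) and genus-invariance under purely inseparable extensions, transferring pure inseparability fibrewise rather than transferring the genus. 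So your proof becomes complete once you supply that spreading-out lemma; as written, the step it replaces is asserted, not proved.
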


\begin{definition}  \label {ij920201e9d2jxd}
Let $\bk$ be an algebraically closed field and let $A=\kk2$.
Consider $F \in A \setminus \bk$ such that, for almost all $\lambda \in \bk$, $F-\lambda$ is irreducible in $A$.
Let $f: \aff^2= \Spec A \to  \aff^1=\Spec\bk[F]$ be the morphism determined by the inclusion $\bk[F] \hookrightarrow A$.
Choose a commutative diagram
\begin{equation}  \label {7y74r47r77392392093jeij}
\xymatrix{
X \ar[r]^{\bar f}  &  \proj^1  \\
\aff^2  \ar @{^{(}->} [u] \ar[r]_{f}  &  \aff^1 \ar @{^{(}->} [u]
}
\end{equation}
where $X$ is a nonsingular projective surface, the vertical arrows are open immersions, and $\bar f$ is a morphism.
Note that $\bar f^{-1}(P)$  is an integral curve for almost all closed points $P \in \proj^1$.
\begin{enumerate}

\item We say that {\it $(F,A)$ has no moving singularities\/} if
$\bar f^{-1}(P)$ is a nonsingular curve for almost all closed points $P \in \proj^1$.

\item We say that {\it $(F,A)$ has no moving singularities at finite distance\/} if
$f^{-1}(P)$ is a nonsingular curve for almost all closed points $P \in \aff^1$.

\end{enumerate}
These properties depend only on $(F,A)$, i.e., are independent of the choice of diagram~\eqref{7y74r47r77392392093jeij}.
\end{definition}

\begin{remarks}
Let the assumptions on $\bk$, $A$, $F$ be as in \ref{ij920201e9d2jxd},
and consider the question whether $(F,A)$ has moving singularities.
\begin{enumerate}

\item If $\Char\bk=0$ then $(F,A)$ has no moving singularities, by a theorem of Bertini.

\item Assume that $\Char\bk=p>0$.
If $(F,A)$ has no moving singularities then it has no moving singularities at finite distance.
However the converse is not true (see \ref{pfupq293up9qwjd}, for instance).
\end{enumerate}
\end{remarks}

\begin{theorem} \label {q9830732847ryt23rhf}
Let $\bk$ be an algebraically closed field and let $A=\kk2$.
For $F \in A$, the following conditions are equivalent:
\begin{enumerata}
\item \label {k3kk34k2j3ki3jo3i4uy7}
$F$ is a generally rational polynomial in $A$ and $(F,A)$ has no moving singularities;
\item \label {8788sx88x8x8x878x7x8x7f} $F$ is a field generator in $A$.
\end{enumerata}
\end{theorem}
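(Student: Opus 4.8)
The plan is to translate both conditions into statements about the generic fibre of the extended pencil. Fix a diagram as in~\eqref{7y74r47r77392392093jeij}, write $K=\bk(F)$ for the function field of the base $\proj^1$, and let $X_\eta$ denote the generic fibre of $\bar f\colon X\to\proj^1$. Since $X$ is a nonsingular projective surface mapping dominantly to the nonsingular curve $\proj^1$, the morphism $\bar f$ is flat, and $X_\eta$ is a regular, geometrically connected projective curve over $K$ with $K(X_\eta)=\Frac A$. The first step is the key dictionary: \emph{$(F,A)$ has no moving singularities if and only if $X_\eta$ is smooth over $K$.} The locus in $X$ where $\bar f$ is a smooth morphism is open, and, $\bar f$ being flat, it meets a fibre $\bar f^{-1}(P)$ in exactly the nonsingular locus of that fibre (here one uses that $\kappa(P)=\bk$ is algebraically closed, so for closed fibres nonsingular $=$ smooth). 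By properness of $\bar f$, the image in $\proj^1$ of the closed non-smooth locus is closed, hence is either finite or all of $\proj^1$; it contains the generic point exactly when it is everything. Thus $X_\eta$ is smooth over $K$ iff $\bar f^{-1}(P)$ is nonsingular for almost all closed $P$, which also makes the property visibly independent of the chosen diagram.

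Next I would record that general rationality forces the arithmetic genus of $X_\eta$ to vanish. As $\bar f$ is a flat projective family of curves, $p_a\big(\bar f^{-1}(P)\big)$ is constant in $P$. When $(F,A)$ has no moving singularities, almost all closed fibres are nonsingular rational projective curves over the algebraically closed field $\bk$, hence are copies of $\proj^1$ of genus $0$; therefore $p_a(X_\eta)=0$. Combined with the dictionary, under the hypotheses of \eqref{k3kk34k2j3ki3jo3i4uy7} the fibre $X_\eta$ is a \emph{smooth projective curve of genus $0$ over $K$}.

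For \eqref{k3kk34k2j3ki3jo3i4uy7}$\Rightarrow$\eqref{8788sx88x8x8x878x7x8x7f}, the remaining point is to produce a $K$-rational point on this smooth genus-$0$ curve. Since $F$ is transcendental over $\bk$, we have $K=\bk(F)\cong\bk^{(1)}$, which by Tsen's theorem is a $C_1$ field; hence $\operatorname{Br}(K)=0$ and every smooth genus-$0$ curve over $K$ is isomorphic to $\proj^1_K$. Therefore $X_\eta\cong\proj^1_K$, so $\Frac A=K(X_\eta)=K^{(1)}$, and a coordinate $H$ on $\proj^1_K$ gives $\bk(F,H)=\Frac A$; that is, $F$ is a field generator. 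Conversely, for \eqref{8788sx88x8x8x878x7x8x7f}$\Rightarrow$\eqref{k3kk34k2j3ki3jo3i4uy7}, if $\bk(F,H)=\Frac A$ then $\Frac A=K(H)$ is purely transcendental over $K$, so the smooth projective model of $\Frac A/K$ is $\proj^1_K$; thus $X_\eta\cong\proj^1_K$ is smooth over $K$, and the dictionary yields that $(F,A)$ has no moving singularities. Smoothness of $X_\eta$ further shows that almost all closed fibres are nonsingular integral curves of genus $0$, hence rational, so $F$ is generally rational. (Alternatively, a field generator is a pseudo field generator and satisfies $F\notin A^p$, since $\Frac A=\bk(F,H)$ makes $dF,dH$ a basis of $\Omega_{\Frac A/\bk}$ while $F\in A^p$ would force $dF=0$; thus $F$ is generally rational by Theorem~\ref{o9823ry91c83yrquwei}.)

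I expect the main obstacle to be the clean proof of the dictionary in the first paragraph: verifying flatness of $\bar f$, identifying the smooth locus with the fibrewise nonsingular locus over the non-perfect base field $K$, and carrying out the ``smooth at $X_\eta$ $\iff$ smooth over a dense open'' reduction via properness. Granting that, the genuinely positive-characteristic difficulty—exhibiting a $K$-rational point on a smooth genus-$0$ fibre, which can fail for a conic without rational points (notably when $\Char\bk=2$)—is dissolved by Tsen's theorem once one knows $K\cong\bk^{(1)}$.
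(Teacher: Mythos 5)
Your proposal is correct in substance but follows a genuinely different route from the paper's. The paper never works with the generic fibre as a scheme: it reduces both implications to the single claim ``$g=0$ iff $(F,A)$ has no moving singularities'', where $g$ is the genus of the function field $\Frac A/\bk(F)$, and proves that claim by quoting Russell's explicit pencil computation (Lemma \ref{x92938up92fjeidfq}) showing that the arithmetic genus of almost every closed fibre $\bar f^{-1}(P)$ equals $g$; the facts that $\bk(F)$ is algebraically closed in $\Frac A$ and that ``field generator $\Leftrightarrow g=0$'' are imported from the proof of Theorem \ref{o9823ry91c83yrquwei} and from \ref{7832t8763trhxg0nfhn}--\ref{823o87cry1i234eyhfdqwiue}. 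You replace Russell's computation by general machinery: flatness of $\bar f$ over the Dedekind base, openness of the smooth locus plus properness (your ``dictionary''), and constancy of $\chi(\mathcal{O})$ in flat proper families, which gives $p_a(X_\eta)=p_a\big(\bar f^{-1}(P)\big)$ directly. Both arguments then converge on the same final step: Tsen's theorem producing a rational point on a genus-zero curve over the $C_1$ field $\bk(F)$. Your version is more self-contained (no appeal to \cite{Rus:fg}) and makes the independence of the choice of diagram transparent; the paper's version stays inside function-field theory and recycles work already done for Theorem \ref{o9823ry91c83yrquwei}.

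There is, however, one real gap: the assertion, made immediately after fixing the diagram, that $X_\eta$ is \emph{geometrically connected} over $K$. This is false for a general $F$ (take $F=G^2$ with $\Char\bk\neq 2$: then $\bk(G)$ is algebraic over $\bk(F)$ inside $\Frac A$, so $H^0(X_\eta,\mathcal{O}_{X_\eta})\supseteq\bk(G)\supsetneq K$), and it is precisely the point where the paper has to work, via \eqref{5923ueip23fnijq3} and \eqref{9823r723txhdi2ed}, to show that $\bk(F)$ is algebraically closed in $\Frac A$. You use it essentially: without $H^0(X_\eta,\mathcal{O}_{X_\eta})=K$, ``genus $0$'' does not identify $X_\eta$ with a conic, and the Tsen step collapses (for instance $X_\eta$ could be a $\proj^1$ over a quadratic extension of $K$, which has no $K$-point to find). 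The gap is repairable with tools you are already using: under condition (a), almost all closed fibres are integral (this is built into Definition \ref{ij920201e9d2jxd}), and the locus of points of $\proj^1$ with geometrically integral fibre is constructible (EGA IV, 9.7.7), hence contains the generic point, so $X_\eta$ is geometrically integral and $H^0(X_\eta,\mathcal{O}_{X_\eta})=K$; under condition (b), $\Frac A=K(H)$ is purely transcendental over $K$, so $K$ is automatically algebraically closed in it. Relatedly, in the direction (b)$\Rightarrow$(a) you must first verify that $F-\lambda$ is irreducible for almost all $\lambda$, since otherwise ``$(F,A)$ has no moving singularities'' is not even defined (Definition \ref{ij920201e9d2jxd} presupposes this); that too follows from geometric integrality of $X_\eta\cong\proj^1_K$ by the same constructibility result, or from your own parenthetical differential argument combined with Theorem \ref{o9823ry91c83yrquwei}.
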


Given a field extension $F/E$, a valuation ring ``of $F$ over $E$'' is a valuation ring $\Oeul$
satisfying $E \subseteq \Oeul \subseteq F$ and $\Frac \Oeul = F$.

\begin{definition} \label {0293cn2r929af498}
\begin{enumerate}

\item Given $E \subset B$, where $E$ is a field and $B$ is a domain,
we write $\proj(B/E)$ for the set of all valuation rings $\Oeul$ of $\Frac B$ over $E$ satisfying $\Oeul \neq \Frac B$;
we also set 
$$
\proj_\infty(B/E) = \setspec{ \Oeul \in \proj(B/E) }{ B \nsubseteq \Oeul }
\text{\ \ and\ \ }
\proj_{\text{\rm fin}}(B/E) = \setspec{ \Oeul \in \proj(B/E) }{ B \subseteq \Oeul } .
$$
The elements of $\proj_\infty(B/E)$ are called the ``places at infinity'' of $B/E$.
Note that $\bigcap \proj_{\text{\rm fin}}(B/E)$ is the integral closure of $B$ in $\Frac B$.\footnote{%
We abbreviate $\bigcap_{ \Oeul \in \proj_{\text{\rm fin}}(B/E)}\!\Oeul$ to
$\bigcap \proj_{\text{\rm fin}}(B/E)$
and we decree that $\bigcap \proj_{\text{\rm fin}}(B/E) = \Frac B$
when $\proj_{\text{\rm fin}}(B/E) = \emptyset$.}

\item Let $\bk$ be a field, $F$ an irreducible element of $A = \kk2$ and $R = A/(F)$.
Then it is customary to refer to the elements of 
$\proj_\infty(R/\bk)$ as the places at infinity of $R$, or of $\Spec R$, or of $F$.
The cardinal number $| \proj_\infty(R/\bk) |$ is a positive integer; if it is $1$, we say
that $R$ (or $\Spec R$, or $F$) has one place at infinity.

\item Let $\bk$ be a field and $F \in A = \kk2$, $F \notin \bk$.
Let $\Aeul = S^{-1}A$ where $S = \bk[F] \setminus \{0\}$. Then the elements of 
$\proj_\infty( \Aeul/\bk(F) )$ are called the {\it dicriticals\/} of $F$ (or more correctly, of the pair $(F,A)$).
Given a dicritical $\Oeul \in \proj_\infty( \Aeul/\bk(F) )$,
the residue field $\kappa$ of $\Oeul$ is a finite extension of $\bk(F)$; the number $[\kappa:\bk(F)]$ is called the 
{\it degree\/} of the dicritical;
one says that the dicritical $\Oeul$ is {\it purely inseparable\/} if $\kappa$ is
purely inseparable over $\bk(F)$.
Note that a dicritical of $F$ is the same thing as a place at infinity of $\Aeul/\bk(F)$.
By ``the number of dicriticals of $F$'' we mean the cardinal number $| \proj_\infty(\Aeul/\bk(F)) |$,
which is a positive integer.

\end{enumerate}
\end{definition}

In \cite[Rem.\ after 1.3]{Rus:fg}, 
Russell observes that a field generator $F \in A$ is good if and only if it has at least one 
dicritical of degree $1$.
The next result gives an analogous criterion for pseudo field generators.

\begin{theorem} \label {9230r9984tyqrawdc9r0fhd}
Let $\bk$ be an algebraically closed field and let $F \in A=\kk2$ be a pseudo field generator in $A$.
The following conditions are equivalent:
\begin{enumerata}
\item \label {928fhnc1bci3urh} $F$ is a good pseudo field generator in $A$;
\item \label {432kln9c745jdie} $F$ has at least one purely inseparable dicritical.
\end{enumerata}
\end{theorem}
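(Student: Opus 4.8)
The plan is to pass to the generic fibre of $f\colon \aff^2\to\aff^1$. Write $K=\bk(F)$, $L=\Frac A=\bk(X,Y)$ and $\Aeul=S^{-1}A$ with $S=\bk[F]\setminus\{0\}$, so that $\Frac\Aeul=L$, $\trdeg_K L=1$, and the dicriticals of $F$ are exactly the places at infinity $\proj_\infty(\Aeul/K)$. Since $A$ is a UFD, $\Aeul$ is normal, so by the footnote to~\ref{0293cn2r929af498} we have $\Aeul=\bigcap\proj_{\text{\rm fin}}(\Aeul/K)$; hence for $G\in L$ one has $G\in\Aeul$ iff $G$ has no pole at any finite place, i.e.\ iff every pole of $G$ lies at a dicritical. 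The first reduction is that $F$ is a good PFG iff there exists $G\in\Aeul$ with $L/K(G)$ purely inseparable: if $G=a/s\in\Aeul$ works ($a\in A$, $s\in\bk[F]\setminus\{0\}$), then $s\in K^\times$ gives $K(G)=K(a)$, so the polynomial $a\in A$ works too.

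For \ref{928fhnc1bci3urh}$\Rightarrow$\ref{432kln9c745jdie}: given such a $G\in\Aeul$ with $G\notin K$, the induced finite morphism $\phi_G\colon C\to\proj^1_K$ from the smooth projective model $C$ of $L/K$ is purely inseparable, hence bijective on points with purely inseparable residue extensions. Thus $\phi_G^{-1}(\infty)$ is a single point $P$, the unique pole of $G$. As $G\in\Aeul$ is regular at every finite place, $P$ is a place at infinity, i.e.\ a dicritical, and its residue field is purely inseparable over $\kappa(\infty)=K$; so $P$ is a purely inseparable dicritical.

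For \ref{432kln9c745jdie}$\Rightarrow$\ref{928fhnc1bci3urh}, the heart of the matter, let $D$ be a purely inseparable dicritical. Since $F$ is a PFG, fix $t\in L$ with $L/K(t)$ purely inseparable and choose $e$ so large that $L^{p^e}\subseteq K(t)$ and $\kappa(D)^{p^e}\subseteq K$. Put $\tilde K=K^{1/p^e}=\bk(F^{1/p^e})$ and $\tilde L=\tilde K(s)$ with $s=t^{1/p^e}$; then $L\subseteq\tilde L$, the extension $\tilde L/L$ is purely inseparable with $\tilde L^{p^e}=K(t)\subseteq L$, and the model of the rational function field $\tilde L/\tilde K$ is $\proj^1_{\tilde K}$. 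The key step is to show that the point $\tilde D$ of $\proj^1_{\tilde K}$ lying over $D$ is $\tilde K$-rational: because $\kappa(D)$ is purely inseparable over $K$ of exponent $\le e$ we have $\kappa(D)\subseteq K^{1/p^e}=\tilde K$, so the radicial base change by $\tilde K$ absorbs the entire residue extension and $\kappa(\tilde D)=\tilde K$. This is exactly where purely inseparability of the dicritical is used; a dicritical with a separable part in its residue field would survive the base change.

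Granting this, choose a coordinate $s'$ on $\proj^1_{\tilde K}$ (a M\"obius transform of $s$ over $\tilde K$, so $\tilde K(s')=\tilde L$) whose only pole is a simple pole at the rational point $\tilde D$, and set $G=s'^{p^e}$. Then $G\in\tilde L^{p^e}\subseteq L$; its only pole in $\tilde L$ lies at $\tilde D$, hence in $L$ its only pole is at $D$, so $G$ is regular at every finite place and $G\in\Aeul$; and $K(G)=\tilde K^{p^e}(s'^{p^e})=\tilde L^{p^e}$, whence $L/K(G)=L/\tilde L^{p^e}$ is purely inseparable, being squeezed in the Frobenius tower $\tilde L^{p^e}\subseteq L\subseteq\tilde L$. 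By the first reduction $F$ is a good PFG. I expect the main obstacle to be precisely the key step above: verifying that a purely inseparable place becomes $\tilde K$-rational after the radicial base change $\tilde K/K$, and that the property ``pole only at $D$'' is preserved on descending from $\tilde L$ to $L$; the remaining bookkeeping is formal.
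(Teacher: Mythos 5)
Your reduction to $G\in\Aeul$ and your proof that \eqref{928fhnc1bci3urh} implies \eqref{432kln9c745jdie} are correct, and they are essentially the paper's own argument: the paper extends the place at infinity of $\bk(F,G)/\bk(F)$, whose residue field is $\bk(F)$, uniquely through the purely inseparable extension $\Frac A/\bk(F,G)$, and observes that the resulting place of $\Frac A$ is a pole of $G$, hence a dicritical, with purely inseparable residue field.

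The converse direction contains a genuine gap, at exactly the step you flagged as the main obstacle: the claim that $\widetilde D$ is $\widetilde K$-rational is false. The source of the error is that $\widetilde L=\widetilde K(t^{1/p^e})=(K(t))^{1/p^e}$ is \emph{not} the radicial base change $L\widetilde K$ of $L$; it is strictly larger, and the extra purely inseparable extension $\widetilde L/L\widetilde K$ (extracting $p^e$-th roots of $t$) re-creates an inseparable residue extension at the place over $D$, even though the base change $L\widetilde K$ by itself would indeed absorb $\kappa(D)$. Here is a counterexample (note that your key step uses nothing about $D$ being a place at infinity, only that $\kappa(D)/K$ is purely inseparable, so it must be valid as a general function-field statement): take $K=\bk(F)$, $a=F$, $L=K(t)$ (so $L/K(t)$ is trivially purely inseparable), and let $D$ be the place of $L/K$ with uniformizer $t^p-a$; then $\kappa(D)=K(a^{1/p})$ is purely inseparable over $K$, and your hypotheses hold for every $e\ge 1$. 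Writing $\widetilde K=K^{1/p^e}$ and $s=t^{1/p^e}$, one has $t^p-a=(s^p-a^{1/p^e})^{p^e}$ in $\widetilde L=\widetilde K(s)$, and $s^p-a^{1/p^e}$ is irreducible over $\widetilde K$ because $a^{1/p^{e+1}}\notin\widetilde K$ (equivalently, $F\notin\bk(F)^p$). Hence $\kappa(\widetilde D)=\widetilde K\big(a^{1/p^{e+1}}\big)=K^{1/p^{e+1}}$, which has degree $p$ over $\widetilde K$. So for \emph{every} finite $e$ the residue field stays one $p$-th root ahead of the base field, $\widetilde D$ is never $\widetilde K$-rational, and the coordinate $s'$ with a simple pole at $\widetilde D$ does not exist, so your construction of $G$ never gets started. (In this example a good $G$ does exist, e.g.\ $G=1/(t^p-a)$, for which $K(G)=K(t^p)\supseteq L^p$ and whose only pole is $D$; but it is not of the form $s'^{\,p^e}$ for any coordinate $s'$ of $\widetilde L/\widetilde K$.)

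This failure is precisely why the paper passes to the \emph{full perfection} rather than a finite truncation: in the notation of the proof of Theorem~\ref{o9823ry91c83yrquwei}, it works over $K=\bk(\tau)^{p^{-\infty}}$, where the unique extension $\Oeul$ of the dicritical has residue field purely inseparable over a perfect field, hence equal to $K$ --- the collapse of the residue extension that you need happens only in this limit. Rationality over the perfection, $L=K^{(1)}$, is already available there by \eqref{9f92039c32cruf}; the paper then takes $\Reul=\bigcap\big(\proj(L/K)\setminus\{\Oeul\}\big)=K[v]$ and $g=v^{p^n}\in L_0$, and descends $g$ into $\Aeul$ --- the analogue of the final, correct, part of your argument --- using the normality of $R_0$ and the bijection $\proj(L/K)\to\proj(L_0/\bk(\tau))$. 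A repair of your finite-level argument would in effect require letting $e\to\infty$, i.e., reproducing the paper's proof.
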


The case $\bk=\Comp$ of the next result can be found in \cite[Th.\ 2]{Suzuki} and 
\cite[Cor.\ 2]{Kal:TwoRems};
the more general case $\Char\bk=0$ is proved in \cite[1.6]{MiySugie:GenRatPolys}.
The case $\Char\bk>0$ appears to be new.

\begin{theorem}  \label {020399f09fjcnEH1283}
Let $\bk$ be an algebraically closed field and let $F \in A = \kk2$ be a generally rational polynomial of $A$.
Then
$$
t-1 = \sum_{ \lambda \in \bk } (n_\lambda - 1)
$$
where $t$ is the number of dicriticals of $F$ and $n_\lambda$ is the number of 
irreducible components of the closed subset $V(F-\lambda)$ of $\Spec A$.
\end{theorem}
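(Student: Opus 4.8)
The plan is to choose a smooth projective model $\bar f\colon X \to \proj^1$ exactly as in \ref{ij920201e9d2jxd} and to compare three ways of measuring the complexity of $X$: the number $b$ of irreducible components of the boundary $B = X \setminus \aff^2$, the Picard number $\rho(X)$, and the numbers $m_P$ of irreducible components of the fibers $\bar f^{-1}(P)$. Two identities do all the work. First, since $\aff^2 = \Spec A$ has trivial divisor class group and its only units are the nonzero constants, the localization sequence $\mathcal O(X)^* \to \mathcal O(\aff^2)^* \to \bigoplus_i \Integ[B_i] \to \Cl X \to \Cl(\aff^2) = 0$ has its first map an isomorphism $\bk^* \isom \bk^*$, so the classes of the boundary components form a $\Integ$-basis of $\Cl X = \Pic X$; as $X$ is a rational surface this gives $b = \rho(X)$. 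Second, I would invoke the Shioda--Tate-type identity $\rho(X) = 2 + \sum_{P \in \proj^1}(m_P - 1)$, valid for a $\proj^1$-fibration over any field.

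To see that $\bar f$ is genuinely a $\proj^1$-fibration I argue as follows. Because $F$ is generally rational, $A/(F-\lambda)$ is $\bk$-rational for almost all $\lambda$, so almost all closed fibers of $f$ are geometrically integral rational curves; by constructibility the generic fiber is then geometrically integral of genus $0$. Since $K := \bk(F) = \bk^{(1)}$ is a $C_1$-field, Tsen's theorem furnishes a $K$-rational point, whence the generic fiber is $\isom \proj^1_K$ and, taking $X$ relatively minimal, it is this $\proj^1_K$. In positive characteristic one may instead read the genus-$0$ conclusion off \ref{o9823ry91c83yrquwei}: as $F$ is a pseudo field generator, $\Frac A$ is purely inseparable over some $K^{(1)}$, so the generic fiber is purely inseparably dominated by $\proj^1_K$.

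It remains to identify the quantities. By \ref{0293cn2r929af498} the dicriticals of $F$ are precisely the places at infinity of the generic fiber, i.e. the closed points of $\proj^1_K$ not lying on $\Spec\Aeul$; spreading these out over $\proj^1$ shows that the horizontal (multisection) components of $B$ are exactly the $t$ dicriticals, while every other component of $B$ is vertical. Writing $v_\lambda$ for the number of components of $B$ vertical over $\lambda \in \aff^1$, and noting that the whole fiber over $\infty$ lies in $B$, one gets $m_\lambda = n_\lambda + v_\lambda$ for $\lambda \in \aff^1$ and $b = t + \sum_{\lambda \in \aff^1} v_\lambda + m_\infty$ (the sum is finite since $F-\lambda$ is irreducible for almost all $\lambda$). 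Substituting into $b = 2 + \sum_{\lambda \in \aff^1}(n_\lambda + v_\lambda - 1) + (m_\infty - 1)$, the terms $\sum v_\lambda$ and $m_\infty$ cancel, leaving $t - 1 = \sum_{\lambda \in \bk}(n_\lambda - 1)$. The pleasant feature is that any vertical boundary components at finite distance (which can occur in characteristic $p$) cancel automatically, so no hypothesis on moving singularities is needed.

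I expect the main obstacle to be the second identity together with the $\proj^1_K$ claim in characteristic $p$: one must be certain that ``almost all closed fibers rational'' really forces the generic fiber to be geometrically rational, rather than a regular but non-smooth curve of positive arithmetic genus, and that Shioda--Tate is being applied to an honest $\proj^1$-fibration. This is exactly the point where the purely inseparable structure supplied by \ref{o9823ry91c83yrquwei} and Tsen's theorem enter. By contrast, the identity $b = \rho(X)$ and the bookkeeping of the final paragraph are routine.
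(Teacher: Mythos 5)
Your outer shell is sound: the identity $b=\rho(X)$ via the localization sequence is correct, the identification of the horizontal components of $B$ with the dicriticals is correct, and the final bookkeeping does deduce the stated formula from $\rho(X)=2+\sum_P(m_P-1)$. The genuine gap is the central claim that the generic fiber of $\bar f$ is $\proj^1_K$, $K=\bk(F)$; this is false in general, and both arguments you offer for it break down in characteristic $p$. What passes between the general closed fiber and the generic fiber is the \emph{arithmetic} genus (this is Lemma \ref{x92938up92fjeidfq}), not the geometric genus: a general $\bar f^{-1}(P)$ is an integral rational curve but may be singular (with singularities on $B$), so its arithmetic genus $g$ can be positive, in which case the generic fiber is a regular, geometrically integral, but non-smooth curve over $K$ of genus $g>0$. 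Likewise ``purely inseparably dominated by $\proj^1_K$'' does not force genus $0$: genus is not invariant under purely inseparable extensions (this is exactly Tate's genus-change phenomenon). Indeed, by Theorem \ref{q9830732847ryt23rhf} the generic fiber is $\proj^1_K$ if and only if $F$ is a field generator, and the paper's Examples \ref{pfupq293up9qwjd} and \ref{pfp32w9rp9qjfaej} (e.g.\ $F=X^p+Y^{p+1}$) are generally rational polynomials for which this fails; those are precisely the cases where the theorem has new content, and your argument does not cover them.

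Once $X_\eta$ is allowed positive genus, the fibration identity acquires a correction term: restriction gives a surjection $\Pic X\to\Pic X_\eta$ whose kernel is the subgroup generated by classes of fiber components, of rank $1+\sum_P(m_P-1)$, while $\Pic X_\eta$ has rank $1+r$, where $r$ is the rank of the group of degree-zero divisor classes of $\Frac A/\bk(F)$. So what is true in general is $\rho(X)=2+\sum_P(m_P-1)+r$, and your computation only proves $t-1=\sum_{\lambda}(n_\lambda-1)+r$. The entire difficulty of the theorem is to show $r=0$, i.e.\ that every degree-zero divisor class of $\Frac A/\bk(F)$ is torsion. That is exactly what the paper's property $(*)$ (Definition \ref{9239f9cn8d3ndha} and Lemma \ref{pf9u293jdnaKSFJ}) supplies --- property $(*)$ is equivalent to this torsion statement --- and it is proved there by descending the rational case through the purely inseparable extension $L/L_0$ constructed in the proof of Theorem \ref{o9823ry91c83yrquwei}, using the bijection between the places of $L/K$ and those of $L_0/\bk(\tau)$. (The paper's own proof then avoids projective models altogether: it computes the rank of $\Aeul^*/\bk(F)^*$ in two ways, obtaining $\sum_{\lambda}(n_\lambda-1)$ from the factorizations of the reducible fibers and $t-1$ from the dicriticals via property $(*)$.) So your geometric framework could be salvaged, but only after importing precisely the ingredient your proposal treats as routine.
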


\section*{Remarks and examples}

It is quite clear that Theorems \ref{o9823ry91c83yrquwei} and \ref{q9830732847ryt23rhf}
are of the same nature:
each states the equivalence of two conditions on $F \in A$,
the first being a property of the fiber of $F$ over a general closed point, and
the second, an algebraic property of the pair $(F,A)$ which is a weakening
of the condition ``there exists $G$ satisfying $A = \bk[F,G]$''.

To gain some perspective, we shall now recall two more results of the same type
(\ref{2930923i2indedp} and \ref{o902939jcoidjq239}).
One could formulate these facts in a characteristic-free language,
as we did in \ref{fp939223ekfjowe8}--\ref{020399f09fjcnEH1283}, 
but for the sake of simplicity we mainly consider the case $\Char\bk>0$  in this discussion.

\medskip
\noindent{\bf Polynomial curves.}
Let $\bk$ be an algebraically closed field.
An affine curve over $\bk$ is called a {\it polynomial curve\/} if it is rational and has one place at
infinity.
Abusing language, one says that an irreducible $F \in A = \kk2$ is a ``polynomial curve in $A$''
if $\Spec A/(F)$ is a polynomial curve.\footnote{Apparently, the term ``polynomial curve'' was coined by Abhyankar.
Note that $F$ is a polynomial curve in $A=\kk2$ if and only if $A/(F)$ is a subalgebra of a $\kk1$.
That is, a polynomial curve is an affine curve that can be parametrized by univariate polynomials.}
The first result that we want to recall is:

\begin{theorem}[\cite{Dai:pencils}] \label {2930923i2indedp}
Let $A = \kk2$, where $\bk$ is algebraically closed and of characteristic $p>0$.
For $F \in A$, the following are equivalent:
\begin{enumerate}

\item for almost all $\lambda \in \bk$, $F-\lambda$ is a polynomial curve in $A$;

\item $F \notin A^p$ and there exist $G \in A$ and $n \in \Nat$ such
that $A^{p^n} \subseteq \bk[F,G]$. 

\end{enumerate}
\end{theorem}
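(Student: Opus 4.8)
The plan is to prove the two implications separately, leaning on the characterisations of (pseudo) field generators already established in Theorems~\ref{o9823ry91c83yrquwei} and~\ref{9230r9984tyqrawdc9r0fhd}. The easy direction is $(2)\Rightarrow(1)$. Assume $F\notin A^p$ and $A^{p^n}\subseteq B:=\bk[F,G]\subseteq A$ for some $G\in A$, $n\in\Nat$. Each $a\in A$ satisfies $(T-a)^{p^n}=T^{p^n}-a^{p^n}\in B[T]$, so $A$ is integral, module-finite and purely inseparable over $B$; in particular $\Frac B$ contains $(\Frac A)^{p^n}$ and thus has transcendence degree $2$ over $\bk$, whence $F,G$ are algebraically independent and $B=\bk^{[2]}$. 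Since $F$ is a coordinate of $B$, every fibre of $F$ on $\Spec B$ is a line $\aff^1$. For each $\lambda$ the fibre $\Spec A/(F-\lambda)A$ is purely inseparable over this line, hence homeomorphic to $\aff^1$; its reduction is an integral rational curve whose normalisation is purely inseparable over $\aff^1$ and so has one place at infinity. As $F\notin A^p$, the difference $F-\lambda$ is reduced for almost all $\lambda$ (its greatest common divisor with a nonzero partial derivative of $F$ is trivial outside a finite set), and for those $\lambda$ the fibre is a polynomial curve. This gives $(1)$.

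For the main direction $(1)\Rightarrow(2)$, assume almost all $F-\lambda$ are polynomial curves. A polynomial curve is rational, so $F$ is a generally rational polynomial; and $F\notin A^p$, for otherwise $F=H^p$ would make $F-\mu^p=(H-\mu)^p$ a non-reduced, reducible fibre for every $\mu$. By Theorem~\ref{o9823ry91c83yrquwei}, $F$ is a pseudo field generator. I would then pass to a completion $\bar f\colon X\to\proj^1$ of $f\colon\aff^2\to\aff^1=\Spec\bk[F]$ as in~\eqref{7y74r47r77392392093jeij}, with boundary $D=X\setminus\aff^2$, and analyse its horizontal part $D_{\mathrm h}$ (the union of components dominating $\proj^1$). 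The dicriticals of $F$ correspond to the components of $D_{\mathrm h}$, and the places at infinity of a general closed fibre $C_\lambda$ are exactly its intersections with $D_{\mathrm h}$. Since each horizontal component meets the general fibre, ``one place at infinity'' forces $D_{\mathrm h}$ to be irreducible, so $F$ has a single dicritical $E$; and because $\bk$ is algebraically closed, the general fibre of $E\to\proj^1$ being a single point means this map has separable degree $1$. Hence the unique dicritical is purely inseparable, and Theorem~\ref{9230r9984tyqrawdc9r0fhd} supplies $G\in A$ with $\Frac A$ purely inseparable over $\bk(F,G)$.

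It remains to promote this field-theoretic statement to the ring containment. Put $B=\bk[F,G]$; as before $F,G$ are algebraically independent, so $B=\bk^{[2]}$ is normal and $(\Frac A)^{p^n}\subseteq\Frac B$ for some $n$. The crucial point is to show that $\pi\colon\Spec A\to\Spec B$ is a \emph{finite} morphism: it is quasi-finite and dominant by pure inseparability, and I expect properness to follow from the fact that $F$ has a single purely inseparable dicritical, which should let one choose $G$ so that $(F,G)$ extends over the completion carrying $D$ into the boundary of the target $\aff^2$. Granting finiteness, every $a\in A$ is integral over $B$, so $a^{p^n}\in\Frac B$ is integral over the normal ring $B$ and therefore lies in $B$; as this holds for all $a$, we obtain $A^{p^n}\subseteq B$, completing $(2)$.

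The main obstacle is this last step: converting the purely inseparable field extension furnished by Theorem~\ref{9230r9984tyqrawdc9r0fhd} into a module-finite (equivalently proper) map $\Spec A\to\Spec B$, since it is precisely here that the global pencil geometry—its single purely inseparable dicritical—must be used to control $A$ near infinity rather than only over the generic point of $\aff^1$. The spreading-out used to deduce the generic fibre's place structure from that of the general closed fibre is a secondary difficulty, demanding care with inseparability in characteristic $p$.
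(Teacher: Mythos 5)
You should know at the outset that the paper itself contains no proof of this statement: Theorem \ref{2930923i2indedp} is quoted from \cite{Dai:pencils} with the remark that it is a corollary of the main result of that paper, so your proposal must stand on its own. Your direction (2)$\Rightarrow$(1) is correct and essentially complete: $A^{p^n}\subseteq\bk[F,G]\subseteq A$ makes $\Spec A\to\Spec\bk[F,G]$ a finite universal homeomorphism, so every fibre of $F$ is irreducible and its reduction has coordinate ring purely inseparable over a $\bk^{[1]}$, and square-freeness of $F-\lambda$ for almost all $\lambda$ (from $F\notin A^p$) finishes the argument. In (1)$\Rightarrow$(2) the reduction is also sound up to and including the production of $G$: the identification of dicriticals with horizontal components of $D$ (valid because $\bar f$ is a morphism in diagram \eqref{7y74r47r77392392093jeij}), the count forcing a unique dicritical, and the separable-degree-one conclusion via \ref{9087f8912hwedjnawoe} are all correct (with the small caveat that places at infinity of $C_\lambda$ are points of the normalization of $\bar C_\lambda$ over $\bar C_\lambda\cap D_{\mathrm h}$, so their number dominates the number of intersection points --- the inequality you need goes the right way); and invoking Theorems \ref{o9823ry91c83yrquwei} and \ref{9230r9984tyqrawdc9r0fhd} is legitimate and non-circular, since the paper's proofs of those theorems nowhere use \ref{2930923i2indedp}.

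The gap is your last step, and it is not a loose end but the actual content of the theorem. ``Good pseudo field generator'' cannot by itself be promoted to finiteness of $\Spec A\to\Spec\bk[F,G]$, nor to $A^{p^n}\subseteq\bk[F,G]$: the paper's example \ref{ai93948rp9qjewpqjra}, $F=(X^p+Y^{p+1})Y$, is a good PFG with $F\notin A^p$ which is \emph{not} a $p$-generator. So the one-dicritical geometry must be consumed in an essential way, and your sketch (``I expect properness to follow'', ``should let one choose $G$'') never does so. Here is exactly how far your data takes you: for the $G$ constructed in the proof of \ref{9230r9984tyqrawdc9r0fhd}, the unique dicritical is the only pole of $G$ among the places of $\Frac A/\bk(F)$, and these places restrict bijectively to places of $\bk(F,G)/\bk(F)$; hence $\bk(F,G)\cap\Aeul=\bk(F)[G]$, so $A^{p^m}\subseteq\bk(F)[G]$, and spreading out finiteness from the generic fibre gives only $A^{p^m}\subseteq\bk[F,G,1/s(F)]$ for some $s\in\bk[F]\setminus\{0\}$. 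What remains is precisely the removal of the denominator $s(F)$: properness of $(F,G):\aff^2\to\aff^2$ requires $G\equiv\infty$ on \emph{every} component of $X\setminus\aff^2$ on which $F\not\equiv\infty$ --- including vertical boundary components lying over the finitely many bad values $\lambda_0$ --- and requires $G$ to be non-constant on every irreducible component of every fibre of $F$. Arranging this forces a re-choice of $G$ (roughly, $G$ must vanish to high order along the bad fibres so that one can divide by powers of $F-\lambda_0$ without leaving $A$), and proving that such a choice exists is the global analysis carried out by the main theorem of \cite{Dai:pencils}. Until that step is supplied, the hard implication (1)$\Rightarrow$(2) is unproved.
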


Theorem~\ref{2930923i2indedp} is a corollary of the main result of \cite{Dai:pencils}.
In that paper, one says that $F \in A$ is a {\it $p$-generator in $A$} if
there exist $G \in A$ and $n\ge0$ such that $A^{p^n} \subseteq \bk[F,G]$
(so condition (2) of \ref{2930923i2indedp} states that $F$ is a $p$-generator in $A$ which does not
belong to $A^p$).
Clearly, every $p$-generator in $A$ is a good PFG in $A$ (the converse is not true, by \ref{ai93948rp9qjewpqjra}).
Also note that $F$ is a  $p$-generator in $A$ iff $F^p$ is.

\medskip
\noindent{\bf Lines.}
Let $\bk$ be a field 
and $F \in A = \kk2$.
If there exits $G$ such that $A = \bk[F,G]$, one says that $F$ is a {\it variable in $A$}.
If $A/(F) = \kk1$, one says that $F$ is a {\it line in $A$}.
Obviously, every variable is a line; a line which is not a variable is called an {\it exotic line}.
The Abhyankar-Moh-Suzuki Theorem (\cite{A-M:line}, or \cite{Suzuki} if $\bk=\Comp$)
implies that exotic lines do not exist if $\Char\bk=0$.
If $\bk$ is any field of characteristic $p>0$, then 
$F=X^{p^2}+Y^{p(p+1)}+Y$
is an example of an exotic line in $\bk[X,Y]$.

The second (and last) result that we want to recall is:

\begin{theorem}[\cite{Gan:thesis}] \label {o902939jcoidjq239}
Let $A = \kk2$, where $\bk$ is algebraically closed and of characteristic $p>0$.
For $F \in A$, the following are equivalent:
\begin{enumerate}

\item \label {352r2gyrhilpo987d62} $F-\lambda$ is a line in $A$, for all $\lambda \in \bk$;
\item $F-\lambda$ is a line in $A$, for almost all $\lambda \in \bk$;
\item $F \notin A^p$ and there exist $n \in \Nat$ and $G \in A$ such that $A^{p^n}[F] = \bk[F,G]$.

\end{enumerate}
\end{theorem}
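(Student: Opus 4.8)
The plan is to establish the cycle (1) $\Rightarrow$ (2) $\Rightarrow$ (3) $\Rightarrow$ (1), organised around the generic fibre of $f\colon \aff^2=\Spec A\to\aff^1=\Spec\bk[F]$. Write $K=\bk(F)$ and $\Aeul=S^{-1}A$ with $S=\bk[F]\setminus\{0\}$, so that $C_\eta:=\Spec\Aeul$ is the generic fibre of $f$, a curve over $K$. The implication (1) $\Rightarrow$ (2) is trivial, ``all'' implying ``almost all''.

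For (2) $\Rightarrow$ (3): a line is in particular a rational affine curve with one place at infinity, i.e.\ a polynomial curve, so (2) yields the first condition of Theorem~\ref{2930923i2indedp}. That theorem gives $F\notin A^p$ together with a containment $A^{p^n}\subseteq\bk[F,G_0]$ for some $G_0\in A$ and $n\in\Nat$; it remains to upgrade this to an equality $A^{p^{m}}[F]=\bk[F,G]$. The extra input is smoothness: since $F-\lambda$ is smooth (a line, not merely a polynomial curve) for almost all $\lambda$, and $f$ is flat, the morphism $f$ is smooth along a dense open of $\aff^1$, hence smooth along $C_\eta$; thus $C_\eta$ is smooth over $K$, and since its geometric generic fibre is $\aff^1$ it is a $K$-form of the affine line. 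Such a form is trivialised by a finite purely inseparable base extension $K\hookrightarrow\bk(F^{1/p^{m}})$; tracing the trivialising coordinate back through the integral closure of $\bk[F^{1/p^{m}}]$ in $\Aeul$ (equivalently, adjoining $F$ to $A^{p^{m}}$) produces a $G\in A$ realising $A^{p^{m}}[F]=\bk[F,G]$.

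For (3) $\Rightarrow$ (1): put $B=A^{p^n}[F]=\bk[F,G]$. From $A^{p^n}\subseteq B$ one reads off $\trdeg_\bk B=2$, so $F,G$ are algebraically independent and $B$ is a polynomial ring; moreover $\Frac A/\Frac B$ is purely inseparable, being contained in $\bk(X,Y)/\bk(X^{p^n},Y^{p^n})$. Since $A$ is normal and finite over $B$ it is the integral closure of $B$ in $\bk(X,Y)$, and by miracle flatness $A$ is free over $B$. Fix $\lambda\in\bk$ and set $R=A/(F-\lambda)$, $R_0=B/(F-\lambda)=\bk[\bar G]\isom\kk1$, so $R$ is free over $R_0$. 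Two facts combine to give $R\isom\kk1$: (i) the finite morphism $\Spec A\to\Spec B$ is radicial, so every fibre is geometrically irreducible with function field purely inseparable over that of the corresponding fibre of $B$; and (ii) the divisor $V(F-\lambda)$ is smooth in the smooth surface $\Spec A$. By (ii) $R$ is regular, hence a domain, and by (i) $\Frac R$ is purely inseparable over $\bk(\bar G)=\bk^{(1)}$; as an intermediate field of a purely inseparable extension of $\bk^{(1)}$ inside $\bk(\bar G^{1/p^\infty})$, it is again $\isom\bk^{(1)}$, while (i) shows $R$ has a single place at infinity. A regular affine domain that is rational with one place at infinity is a $\kk1$, so $F-\lambda$ is a line, for every $\lambda$.

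The heart of the matter, and the step I expect to be hardest, is (ii): the equality in (3) must force \emph{every} fibre $V(F-\lambda)$ to be smooth, equivalently $(\partial_X F,\partial_Y F)=A$, and not merely almost all of them. This is exactly what separates lines from general polynomial curves. A $p$-generator satisfying only the containment $A^{p^n}\subseteq\bk[F,G]$ can acquire finitely many singular fibres, e.g.\ a cuspidal fibre isomorphic to $\Spec\bk[X,Y]/(Y^p-X^{p+1})$, which is reduced, rational, and has one place at infinity, yet is not regular. To exclude these one exploits the full equality: writing $X^{p^n}=P(F,G)$, $Y^{p^n}=Q(F,G)$ and differentiating yields the identity $J_1J_2=0$, where $J_2$ is the Jacobian of $(F,G)$ in $(X,Y)$ and $J_1$ that of $(P,Q)$ in $(F,G)$. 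Combined with $F\notin A^p$, the regularity of $A=\kk2$, and the description of $\Spec A\to\Spec B$ by a single derivation---a $1$-foliation on $\aff^2$ admitting $F$ as a first integral---this confines the singular scheme of the foliation off the fibres and gives $(\partial_X F,\partial_Y F)=A$. Handling the finitely many a priori exceptional fibres uniformly, through the classification of purely inseparable forms of $\aff^1$, is the delicate point on which the theorem turns, and the same forms-of-$\aff^1$ input is what powers the upgrade from containment to equality in (2) $\Rightarrow$ (3).
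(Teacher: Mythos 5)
You should first note that the paper does not prove this statement at all: it is recalled from the literature, and the proof is explicitly delegated to Ganong (\cite[3.13 and 3.14]{Gan:thesis} or \cite[3.1 and 4.12]{Ganong:Survey}). So there is no in-paper argument to compare with, and your attempt must stand on its own. It does not: it is an outline whose two pivotal steps are asserted rather than proved, and those two steps are precisely the content of Ganong's theorem. In (2) $\Rightarrow$ (3), the passage from ``$C_\eta$ is a form of $\aff^1$ trivialised by $\bk(F^{1/p^m})$'' to the global ring equality $A^{p^m}[F]=\bk[F,G]$ is compressed into ``tracing the trivialising coordinate back \dots produces a $G\in A$''. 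This hides real obstacles. The parenthetical ``equivalently, adjoining $F$ to $A^{p^m}$'' is not an equivalence: the integral closure of $\bk[F^{1/p^m}]$ in the compositum $\Frac(\Aeul)(F^{1/p^m})$ corresponds to (a localization of) $A^{p^m}[F]$ only after applying the $p^m$-power Frobenius and knowing that $A^{p^m}[F]$ is normal, which is not obvious a priori. And even granting that the generic fibre of $\Spec A^{p^m}[F]\to\Spec\bk[F]$ is $\aff^1_{\bk(F)}$, concluding that $A^{p^m}[F]$ is a polynomial ring $\bk[F]^{[1]}$ is a genuine globalization problem --- one must control the finitely many special fibres, e.g.\ by a residual-variable argument of Russell--Sathaye type or by Ganong's structure theorem --- and nothing of this is supplied.

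The second gap is one you flag yourself: in (3) $\Rightarrow$ (1), the smoothness of \emph{every} fibre $V(F-\lambda)$, which you call ``the heart of the matter'', is never established. The Jacobian identity $\det J_1\cdot\det J_2=0$ obtained by differentiating $X^{p^n}=P(F,G)$, $Y^{p^n}=Q(F,G)$ is a correct starting point, but the ensuing appeal to a $1$-foliation with first integral $F$ that ``confines the singular scheme off the fibres'' is a direction, not an argument: no derivation of $(\partial_XF,\partial_YF)=A$ is given. Without it your endgame cannot start, since freeness of $A/(F-\lambda)$ over $\bk[\bar G]$ and the radicial property of $\Spec A\to\Spec\bk[F,G]$ give irreducible support and a purely inseparable extension of function fields, but not reducedness of the special fibres (a priori $F-\lambda$ could be $ch^e$ with $e>1$), so regularity, and hence the conclusion ``regular $+$ rational $+$ one place at infinity $\Rightarrow$ line'', is out of reach. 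The perimeter of your argument is sound --- (1) $\Rightarrow$ (2) is trivial; the generic fibre is smooth because the non-smooth locus of $f$ is closed and misses almost all closed fibres; forms of $\aff^1$ are trivialised by a purely inseparable extension, and $\bk(F^{1/p^m})$ is the unique such extension of each degree; miracle flatness and the final Lüroth-type step are fine --- but the hard core is missing, and that core is exactly what the cited work of Ganong supplies.
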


(This is a consequence of either one of \cite[3.1 and 4.12]{Ganong:Survey} or \cite[3.13 and 3.14]{Gan:thesis};
more equivalent conditions are given in \cite{Ganong:Survey}, \cite{Gan:thesis}.)

\smallskip
It is obvious that if $F \in A$ satisfies the  equivalent conditions of \ref{o902939jcoidjq239} then $F$
is a line in $A$.  The converse, however, is an open question. 
It is clear that if $F$ is a variable in $A$ then $F$ satisfies those conditions,
and all currently known examples of exotic lines in $A$ also satisfy them,
but it is not known whether all exotic lines have that property.
See \cite{Ganong:Survey} for a discussion of this question.

\begin{parag} \label {zz1jp293r0912wdweoia888}
To summarize, consider the following four subsets of $A = \kk2$
(where $\bk$ is an algebraically closed field of characteristic $p>0$):
\begin{itemize}

\item[] \hspace*{-9mm}
$E_1=$ the set of generally rational polynomials in $A$,
which is equal (by \ref{o9823ry91c83yrquwei}) to the set of PFGs in $A$ not belonging to $A^p$;

\item[] \hspace*{-1cm}
$E_2=$ the set of generally rational polynomials $F$ in $A$ such that $(F,A)$ has no moving singularities,
which is equal (by \ref{q9830732847ryt23rhf}) to the set of field generators in $A$;

\item[] \hspace*{-1cm}
$E_3=$ the set of $F \in A$ such that $F-\lambda$ is a polynomial curve in $A$ for almost all $\lambda \in \bk$,
which is equal (by \ref{2930923i2indedp}) to the set of $p$-generators in $A$ not belonging to $A^p$;

\item[] \hspace*{-1cm}
$E_4=$ the set of $F \in A$ such that $F-\lambda$ is a line in $A$ for almost all $\lambda \in \bk$,
which is equal (by \ref{o902939jcoidjq239}) to the set of $F \in A$ satisfying $F \notin A^p$ and
$\exists_{G,n}$ $A^{p^n}[F]= \bk[F,G]$.

\end{itemize}
Then the following hold:
\begin{enumerate}
\item[(i)] \label {pfoup23909wd}
\text{$E_2 \subset E_1 \supset E_3 \supset E_4$, where all inclusions are strict;}
\item[(ii)] \label {87676x7dx9xc778vx78cx6}
\text{$E_2 \cap E_3 = E_2 \cap E_4 =$ the set of variables of $A$.}
\end{enumerate}
Indeed, inclusions $E_2 \subseteq E_1 \supseteq E_3$ are obvious,
and $E_3 \supseteq E_4$ holds because every line is a rational curve with one place at infinity;
all inclusions are strict by examples \ref{pfp32w9rp9qjfaej} and \ref{ai93948rp9qjewpqjra}.
Assertion (ii) follows from the fact (cf.\ \cite[4.5]{Rus:fg})
that any field generator which has one place at infinity is in fact a variable.
\end{parag}

In the following examples, 
we let $A = \bk[X,Y] = \kk2$
where $\bk$ is algebraically closed and of characteristic $p>0$.

\begin{example}  \label {pfupq293up9qwjd}
Let $F \in A = \bk[X,Y]$ be any exotic line satisfying the  equivalent conditions of~\ref{o902939jcoidjq239}
(for instance, $F=X^{p^2}+Y^{p(p+1)}+Y$) and let $f : \aff^2 \to \aff^1$ be the morphism determined by
the inclusion $\bk[F] \hookrightarrow A$.  By \ref{o902939jcoidjq239}\eqref{352r2gyrhilpo987d62},
$f^{-1}(P) \isom \aff^1$ for every closed point $P \in \aff^1$; 
in particular,
\begin{enumerate}
\item[(i)] $F$ is a generally rational polynomial in $A$ and $(F,A)$ has no moving singularities at finite distance.
\end{enumerate}
As was mentioned in \ref{zz1jp293r0912wdweoia888}, any field generator which has one place at infinity is a variable.
As lines have one place at infinity, it follows that no exotic line is a field generator.
So:
\begin{enumerate}
\item[(ii)] $F$ is not a field  generator in $A$.
\end{enumerate}
The reader should compare (i, ii) to the statement of \ref{q9830732847ryt23rhf}.
Note in particular that $(F,A)$ has moving singularities, but not at finite distance.
\end{example}

\begin{example} \label {pfp32w9rp9qjfaej}
Let $F=X^p+Y^{p+1} \in A = \bk[X,Y]$.
Then $A^p \subset \bk[F,Y]$, so $F$ is a $p$-generator (hence a good PFG) in $A$.
For every $\lambda \in \bk$, $A/(F-\lambda)$ is a singular $\bk$-rational curve with one place at infinity.
By \ref{q9830732847ryt23rhf}, $F$ is not a field generator in $A$.
\end{example}

\begin{example} \label {ai93948rp9qjewpqjra}
Let $F=(X^p+Y^{p+1})Y \in A = \bk[X,Y]$.
Then $\bk(X,Y)$ is purely inseparable over $\bk(F,Y)=\bk(X^p,Y)$, so $F$ is a good PFG in $A$.
For almost all $\lambda \in \bk$, the $\bk$-curve $A/(F-\lambda)$ is a singular rational curve with two places at infinity.
By \ref{2930923i2indedp}, $F$ is not a $p$-generator in $A$;
by  \ref{q9830732847ryt23rhf},  it is not a field generator in $A$.
\end{example}

\section{Preliminaries to the proofs}

\begin{lemma} \label {Matsudp23rwije}
Let $\bk$ be an algebraically closed field and $F \in A = \kk n$. Then the set
$$
\setspec{ \lambda \in \bk }{ \text{$F-\lambda$ is not irreducible in $A$} }
$$
is either finite or equal to $\bk$, and it is equal to $\bk$ if and only if
$F = P(G)$ for some $G \in A$ and some univariate polynomial $P(T) \in \bk[T]$
such that $\deg_T P(T) > 1$.
\end{lemma}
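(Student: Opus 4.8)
The plan is to dispose of a trivial case and then run a clean dichotomy governed by a single field-theoretic invariant. First I would handle $F \in \bk$: there $F-\lambda$ is either $0$ or a nonzero unit, hence never irreducible, so the set in question is all of $\bk$, and one may write $F = P(G)$ with $G=0$ and $P(T)=T^2+F$. So assume $F \notin \bk$; then $F$ is transcendental over $\bk$, $F-\lambda$ is always nonconstant, and ``not irreducible'' means genuinely reducible. The \emph{easy direction} is then explicit: if $F = P(G)$ with $G \in A$ and $d := \deg_T P > 1$, then $G \notin \bk$, and since $\bk$ is algebraically closed $P(T) - \lambda = c\prod_{i=1}^{d}(T - \alpha_i)$ splits for every $\lambda$, whence $F - \lambda = c\prod_{i=1}^d (G - \alpha_i)$ is a product of $d \ge 2$ nonunits; so the set is all of $\bk$.

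For the converse and the dichotomy I would introduce $M$, the relative algebraic closure of $\bk(F)$ in $L := \Frac A$, and set $d := [M : \bk(F)]$, finite since $\trdeg_\bk M = 1$. The cases $d=1$ and $d>1$ will turn out to be exactly ``the set is finite'' and ``the set is $\bk$''. The bridge to reducibility is the generic member $F - t \in \bk(t)[X_1,\dots,X_n]$: being linear in $t$ it is irreducible there by Gauss, its coordinate ring over $\bk(t)$ is isomorphic to $\Aeul = S^{-1}A$ (with $S = \bk[F]\setminus\{0\}$), and its field of constants is $M$. Hence $d=1$ forces the generic fiber to be geometrically integral (the separable part of $[M:\bk(F)]$ counts the geometric components and the inseparable part is their common multiplicity, so $d=1$ kills both), i.e.\ $F-t$ is absolutely irreducible. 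By the Bertini--Noether theorem, absolute irreducibility of the generic member spreads out to all but finitely many specializations $t \mapsto \lambda$, so the set is finite; as $\bk$ is infinite this is a proper subset of $\bk$.

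The heart of the proof is the case $d>1$, where I must upgrade the \emph{field-theoretic} fact $M \supsetneq \bk(F)$ to the \emph{polynomial} statement $F=P(G)$. I would let $\tilde B$ be the integral closure of $\bk[F]$ in $M$: a finitely generated Dedekind $\bk$-algebra with $\Frac \tilde B = M$, and crucially $\tilde B \subseteq A$ (its elements are integral over $A$ and lie in $L$, and $A$ is normal). Since $M \subseteq L=\bk(X_1,\dots,X_n)$ has transcendence degree $1$ over the algebraically closed field $\bk$, L\"uroth's theorem gives $M = \bk(u)$, so the smooth projective model of $\tilde B$ is $\proj^1$ and $\Spec \tilde B = \proj^1 \setminus \{r \text{ points}\}$ for some $r \ge 1$, with $\tilde B^{\times}/\bk^{\times}$ free of rank $r-1$. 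Now the decisive point: $\tilde B \subseteq A$ forces $\tilde B^{\times} \subseteq A^{\times} = \bk^{\times}$, so $r=1$, $\Spec \tilde B \isom \aff^1$, and $\tilde B = \bk[G]$ for some $G \in A$. Then $F \in \tilde B = \bk[G]$ yields $F = P(G)$ with $\deg_T P = [\bk(G):\bk(F)] = d > 1$, and the easy direction shows the reducibility set is all of $\bk$. Assembling the two cases gives both the dichotomy (finite or all of $\bk$) and the stated equivalence.

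The step I expect to be the main obstacle, and the conceptual crux, is precisely this passage in the case $d>1$ from ``$F$ is composite as an extension of fields'' to ``$F$ is a composite of polynomials'': the engine is the triviality of the unit group forced by $\tilde B \subseteq A$, which collapses the number of places at infinity of $\tilde B$ to one and thereby makes $\tilde B$ a polynomial ring. Secondary care is needed, in positive characteristic, to justify $d=1 \Rightarrow$ geometric integrality of the generic fiber and to apply Bertini--Noether, both of which must be invoked in a form insensitive to inseparability.
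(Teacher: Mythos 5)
Your proposal is correct in substance, and it takes a genuinely different route from the paper: the paper does not argue this lemma at all, but simply cites the Bertini--Matsusaka theorem on linear systems in the form given in Schinzel's book (Chap.~3, \S 3, Cor.~1), whereas you give an actual proof. Your dichotomy governed by $d=[M:\bk(F)]$, where $M$ is the relative algebraic closure of $\bk(F)$ in $\Frac A$ --- Bertini--Noether disposing of $d=1$, and the unit-group argument (the integral closure $\tilde B$ of $\bk[F]$ in $M$ lands inside $A$, so $\tilde B^{\times}\subseteq A^{\times}=\bk^{\times}$, which over a genus-zero model forces exactly one place at infinity and hence $\tilde B=\bk[G]$) disposing of $d>1$ --- is a clean, essentially self-contained argument that also explains \emph{why} the composite case yields $F=P(G)$ with $G$ a polynomial rather than merely a rational function; the paper's citation buys brevity, your argument buys transparency. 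Two invocations need sharpening before the proof is complete. First, your heuristic that ``the inseparable part of $[M:\bk(F)]$ is the common multiplicity of the geometric components'' is not a correct general principle: geometric non-reducedness of the generic fiber can arise from the extension $L/M$ and not only from $M/\bk(F)$ (over a base of larger $p$-rank this genuinely happens). The correct argument, special to the base $\bk(t)$ with $\bk$ perfect, is that inseparability of $L/\bk(F)$ would force $F\in L^{p}$, hence $F\in L^{p}\cap A=A^{p}$ since $A$ is a UFD, so $F^{1/p}\in A$ would be an element of $L$ algebraic over $\bk(F)$ and not in it, contradicting $d=1$; you flagged this point yourself, and this is how to settle it. Second, for $n\ge 2$ the appeal to ``L\"uroth's theorem'' must be to the extended statement that any subfield of $\bk(X_1,\dots,X_n)$ of transcendence degree one containing $\bk$ is rational (unirational curves are rational; e.g.\ restrict the induced dominant rational map $\proj^{n}\dashrightarrow C$ to a general line and apply the one-variable L\"uroth theorem) --- classical in every characteristic, but not literally L\"uroth. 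With these two points made precise, your proof stands.
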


\begin{proof}
This can be derived from a general Theorem on linear systems proved by Bertini
(and reproved by Zariski) in characteristic zero,
then generalized to all characteristics by Matsusaka~\cite{Matsusaka:Bertini1950}.
For the result as stated here, see \cite{SchinzelBook2000}, Chap.~3, \S~3, Cor.~1.
\end{proof}

\begin{notations} \label {q9nc29991209fh27gdaj}
Let $F/E$ be a function field in one variable and recall from \ref{0293cn2r929af498} that $\proj(F/E)$
is the set of valuation rings $\Oeul$ of $F$ over $E$ satisfying $\Oeul \neq F$.
The divisor group $\Div(F/E)$ is the free abelian group on the set $\proj(F/E)$;
given $\xi \in F^*$, we write $\div(\xi), \div_0(\xi), \div_\infty(\xi) \in \Div(F/E)$ for the principal divisor,
divisor of zeroes and divisor of poles of $\xi$, respectively.
\end{notations}

\begin{lemma}  \label {uhh6e34qfdpc034}
Let $\bk$ be a field and consider an irreducible $F(X,Y) \in A = \bk[X,Y] = \kk2$.
Then $A/(F)$ is $\bk$-rational if and only if there exists $(x(T), y(T), z(T) ) \in \bk[T]^3$ satisfying:
\begin{enumerate}

\item  $z(T) \neq 0$, $\big\{ \frac{x(T)}{z(T)},  \frac{y(T)}{z(T)} \big\} \nsubseteq \bk$ 
and $F( \frac{x(T)}{z(T)},  \frac{y(T)}{z(T)} ) = 0$;

\item $\max( \deg_T x(T), \deg_T y(T), \deg_T z(T) ) \le \deg_XF+\deg_YF$.

\end{enumerate}
\end{lemma}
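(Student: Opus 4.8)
The plan is to prove the two implications separately. The \emph{if} direction is formal and rests on L\"uroth's theorem, while the \emph{only if} direction requires a genuine geometric input, namely the control of the degrees in~(2).

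For the \emph{if} direction, suppose a triple $(x(T),y(T),z(T))$ satisfying~(1) is given; condition~(2) is not needed here. Since $z(T)\neq0$, the assignment $X\mapsto x(T)/z(T)$, $Y\mapsto y(T)/z(T)$ defines a $\bk$-algebra homomorphism $\phi\colon A\to\bk(T)$, and $F\in\ker\phi$ because $F(x/z,y/z)=0$. As $\{x/z,y/z\}\nsubseteq\bk$, at least one of the two images is transcendental over $\bk$, so $\image\phi=\bk[x/z,y/z]$ has transcendence degree $1$ over $\bk$; hence $\ker\phi$ is a height-$1$ prime of $A$. Since $\ker\phi$ contains the height-$1$ prime $(F)$, we get $\ker\phi=(F)$, so that $A/(F)\isom\bk[x/z,y/z]$ and $\Frac(A/(F))=\bk(x/z,y/z)$. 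This last field is an intermediate field of $\bk(T)/\bk$ of transcendence degree $1$, so L\"uroth's theorem gives $\bk(x/z,y/z)=\bk^{(1)}$; thus $A/(F)$ is $\bk$-rational.

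For the \emph{only if} direction, assume $A/(F)$ is $\bk$-rational and set $R=A/(F)$, so $\Frac R=\bk(T)$. The inclusion $R\hookrightarrow\Frac R$ corresponds to a dominant rational map from $\proj^1$, the smooth projective model of $\bk(T)$, to the projective closure $\overline C=V(\widetilde F)\subseteq\proj^2$ of $\Spec R$, where $\widetilde F$ is the homogenization of $F$. Since $\proj^1$ is smooth and projective, this extends to a morphism $\nu\colon\proj^1\to\overline C$, and $\nu$ is birational because $\Frac R=\bk(T)$. Writing $\nu=[\,z:x:y\,]$ with $z$ the coordinate dual to the homogenizing variable and dehomogenizing $\proj^1$, one obtains $x(T),y(T),z(T)\in\bk[T]$ with $z(T)\neq0$ (as $\overline C$ is not contained in the line at infinity) and $\{x/z,y/z\}\nsubseteq\bk$ (as $\nu$ is nonconstant), satisfying $F(x/z,y/z)=0$. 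This yields~(1).

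It remains to bound the degrees, and this is the crux. The morphism $\nu$ is defined by three sections of the line bundle $L=\nu^{*}\Oeul_{\proj^2}(1)$ on $\proj^1$. Because $\nu$ is birational onto the plane curve $\overline C$ and $\deg\overline C=\deg\widetilde F=\deg F$, one has $\deg L=\deg\nu^{*}\Oeul_{\proj^2}(1)=\deg\overline C=\deg F$; equivalently, a proper parametrization of a degree-$d$ plane curve has degree $d$. Hence $L\isom\Oeul_{\proj^1}(\deg F)$, the three defining sections are forms of degree $\deg F$, and after dehomogenization $\max(\deg_T x,\deg_T y,\deg_T z)\le\deg F$. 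Since $\deg F\le\deg_X F+\deg_Y F$ (each monomial of $F$ has total degree at most $\deg_X F+\deg_Y F$), this is~(2). The one subtle point is the identity $\deg\nu^{*}\Oeul_{\proj^2}(1)=\deg\overline C$ for the birational $\nu$, which holds in every characteristic, so no separability hypothesis intervenes. I expect this degree count to be the main obstacle, everything else being routine.
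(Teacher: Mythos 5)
Your proof is correct, and in the hard (``only if'') direction it takes a genuinely different route from the paper's. The paper stays entirely inside the function field $\bk(T)$: writing the images $\phi,\psi$ of $X,Y$ in lowest terms $u/w_1$, $v/w_2$, it computes
$\deg\div_0(\phi)=\max(\deg u,\deg w_1)=[\bk(T):\bk(\phi)]=\deg_YF$
(and symmetrically $\max(\deg v,\deg w_2)=\deg_XF$), then clears denominators with the explicit triple $(uw_2,\,vw_1,\,w_1w_2)$, whose degrees are bounded by $\deg_XF+\deg_YF$; a separate (trivial) case disposes of $\phi\in\bk$ or $\psi\in\bk$, a case split your uniform argument does not need. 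You instead pass to the projective closure and use the normalization $\nu:\proj^1\to\overline{C}$, together with $\Pic(\proj^1_\bk)=\Integ$, B\'ezout, and the degree formula $\deg\nu^*M=\deg(\nu)\cdot\deg M$ for finite morphisms of integral projective curves --- all of which, as you correctly note, hold over an arbitrary field and in any characteristic, with no separability hypothesis. Your approach buys a sharper conclusion, $\max(\deg_Tx,\deg_Ty,\deg_Tz)\le\deg F\le\deg_XF+\deg_YF$, at the cost of heavier machinery; the paper's divisor computation is elementary and self-contained, and its weaker bound is all that is needed downstream (in Lemma \ref{o83020192djwoidja} the bound merely fixes the number of indeterminates in the constructible-set argument, so nothing is gained there from the sharper estimate). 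For the easy direction both arguments are the same: the paper leaves L\"uroth implicit (``It is clear\dots''), while you spell out the kernel computation $\ker\phi=(F)$ and the appeal to L\"uroth, which is a worthwhile expansion since $\bk$ is not assumed algebraically closed here.
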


\begin{proof}
It is clear that if $(x(T), y(T), z(T) )$ exists then $A/(F)$ is $\bk$-rational.
Conversely, suppose that $A/(F)$ is $\bk$-rational.
Then there exist $\phi,\psi \in \bk(T)=\bk^{(1)}$ satisfying 
$F(\phi,\psi)=0$ and $\bk(\phi, \psi) = \bk(T)$.
If $\phi \in \bk$ then\footnote{We use Abhyankar's symbol ``$\abh$'' to denote
an arbitrary nonzero element of the base field $\bk$.}
$F = \abh (X-a)$ (some $a \in \bk$) and $(x,y,z) = (a,T,1)$ satisfies the desired conditions.
Similarly, if $\psi \in \bk$  then $(x,y,z)$ exists.
From now-on, assume that $\phi,\psi \notin \bk$.
Considering divisors in $\Div( \bk(T)/\bk )$ with notation as in \ref{q9nc29991209fh27gdaj},
\begin{equation} \label {9r92891c2989812hed}
 \deg \div_0(\phi) = [\bk(T) : \bk(\phi)] =\deg_YF 
\quad \text{and} \quad
 \deg \div_0(\psi) = [\bk(T) : \bk(\psi)] =\deg_XF .
\end{equation}
Write $\phi=u/w_1$, $\psi=v/w_2$ where 
$u,v,w_1,w_2 \in \bk[T]$, $w_1, w_2 \neq 0$ and $\gcd(u,w_1)=1=\gcd(v,w_2)$.
Let $u = \abh\prod_{i=1}^m p_i^{e_i}$, $w_1 = \abh\prod_{i=1}^n q_i^{f_i}$ be the prime factorizations
of $u$ and $w_1$ respectively, where  $e_i,f_i>0$ and where the $p_i, q_i \in \bk[T]$ are $m+n$ distinct
monic irreducible polynomials.
Define 
$P_i = \bk[T]_{(p_i)}$ ($1\le i \le m$), $Q_i = \bk[T]_{(q_i)}$ ($1\le i \le n$) and $P_\infty = \bk[T^{-1}]_{(T^{-1})}$;
then $P_i,Q_i,P_\infty \in \proj( \bk(T)/\bk )$ and
$\div(\phi) = \sum_{i=1}^m e_iP_i - \sum_{i=1}^n f_i Q_i + (\deg w_1 - \deg u)P_\infty$, so:
\begin{itemize}

\item if $\deg w_1 > \deg u$ then
$\div_0(\phi) = \sum_{i=1}^m e_iP_i + (\deg w_1 - \deg u)P_\infty$ has degree equal to $\deg w_1$;

\item if $\deg w_1 \le \deg u$ then
$\div_0(\phi) = \sum_{i=1}^m e_iP_i$ has degree equal to $\deg u$;

\end{itemize}
so $\deg \div_0(\phi) = \max( \deg u, \deg w_1)$ in both cases. Then
$\max( \deg u, \deg w_1) = \deg_YF$ by \eqref{9r92891c2989812hed} and,
for similar reasons, $\max( \deg v, \deg w_2) = \deg_XF$.

So $(x,y,z) = ( uw_2, vw_1, w_1w_2 )$ satisfies the desired conditions.
\end{proof}

\begin{lemma}  \label {o83020192djwoidja}
Let $\bk$ be an algebraically closed field and $F \in A = \kk2$.
The following conditions are equivalent:
\begin{enumerate}
\item $A/(F-\lambda)$ is $\bk$-rational for infinitely many $\lambda \in \bk$;
\item $A/(F-\lambda)$ is $\bk$-rational for almost all $\lambda \in \bk$.
\end{enumerate}
\end{lemma}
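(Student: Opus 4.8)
The plan is to prove the nontrivial implication (1)$\Rightarrow$(2) by showing that the set
$$
\Lambda = \setspec{ \lambda \in \bk }{ \text{$A/(F-\lambda)$ is $\bk$-rational} }
$$
is \emph{always} either finite or cofinite in $\bk$; since $\bk$ is algebraically closed, hence infinite, this dichotomy makes conditions (1) and (2) equivalent, the implication (2)$\Rightarrow$(1) being trivial. If $F \in \bk$ then $F-\lambda$ is never irreducible and $\Lambda = \emptyset$, so I may assume $F \notin \bk$; then $d := \deg_X F + \deg_Y F = \deg_X(F-\lambda) + \deg_Y(F-\lambda)$ is a positive integer \emph{independent of $\lambda$}. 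This uniform degree bound is what makes the whole argument run.

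First I would dispose of the irreducibility issue. Being $\bk$-rational requires $F-\lambda$ to be irreducible (Definition \ref{fp939223ekfjowe8}), so $\Lambda \subseteq U := \setspec{\lambda \in \bk}{\text{$F-\lambda$ is irreducible in $A$}}$. By Lemma \ref{Matsudp23rwije}, $U$ is either empty or cofinite. If $U = \emptyset$ then $\Lambda = \emptyset$ and we are done, so assume $U$ cofinite. For $\lambda \in U$, Lemma \ref{uhh6e34qfdpc034} applies to the irreducible polynomial $F-\lambda$ and shows that $\lambda \in \Lambda$ if and only if there exists a triple $(x,y,z) \in \bk[T]^3$ of polynomials of degree $\le d$ satisfying $z \neq 0$, $\{x/z, y/z\} \nsubseteq \bk$, and $(F-\lambda)(x/z, y/z) = 0$.

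The heart of the proof is to recognize ``existence of such a triple'' as a constructible condition on $\lambda$. Writing $F = \sum_{i,j} a_{ij} X^iY^j$ with $D = \deg F$ and setting $\Phi(x,y,z) = \sum_{i,j} a_{ij}\, x^i y^j z^{D-i-j} \in \bk[T]$, the equation $(F-\lambda)(x/z, y/z) = 0$ is equivalent (given $z \neq 0$) to the identity $\Phi(x,y,z) - \lambda z^D = 0$ in $\bk[T]$, i.e.\ to the vanishing of every coefficient in $T$ --- a system of polynomial equations in $\lambda$ and in the $3(d+1)$ coefficients of $x, y, z$. I would therefore let $Z \subseteq \aff^{3(d+1)} \times \aff^1$ be the locally closed set cut out by these equations (closed) together with the open conditions $z \neq 0$ and $\{x/z, y/z\} \nsubseteq \bk$ (the latter being the complement of a rank condition on the coefficient vectors of $x,z$ and of $y,z$). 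By Chevalley's theorem the image of $Z$ under the projection to the $\lambda$-line $\aff^1$ is constructible, and by the previous paragraph $\Lambda = \mathrm{pr}(Z) \cap U$. Since a constructible subset of $\aff^1$ is finite or cofinite, and $U$ is cofinite, $\Lambda$ is finite or cofinite, as required.

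The main obstacle --- and the step deserving the most care --- is the constructibility setup: one must verify that the degree bound of Lemma \ref{uhh6e34qfdpc034} is genuinely uniform in $\lambda$ (so that $Z$ lives in a \emph{fixed} finite-dimensional space, without which Chevalley's theorem does not apply), and that the non-degeneracy conditions $z \neq 0$ and $\{x/z, y/z\} \nsubseteq \bk$ are encoded as honest open conditions on the coefficient space. Everything else is routine bookkeeping.
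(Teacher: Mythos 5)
Your proof is correct and takes essentially the same route as the paper's: both reduce to the irreducibility question via Lemma \ref{Matsudp23rwije}, use the uniform degree bound of Lemma \ref{uhh6e34qfdpc034} to encode the existence of a bounded-degree parametrization as a locally closed condition on a fixed coefficient space (your closed equations and rank conditions are exactly the paper's ideals $I$ and $J$), and apply Chevalley's theorem to the projection onto the $\lambda$-line. The only cosmetic difference is that you state the finite-or-cofinite dichotomy for $\Lambda$ unconditionally, whereas the paper argues directly under hypothesis (1) that the image is infinite and hence cofinite; the underlying constructibility argument is identical.
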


\begin{proof}
Assume that (1) holds.
In particular, there exists $\lambda \in \bk$ such that $F-\lambda$ is irreducible in $A$;
then, by \ref{Matsudp23rwije}, $F-\lambda$ is irreducible in $A$ for almost all $\lambda \in \bk$.
Choose $X,Y$ such that $A=\bk[X,Y]$, let $d = \deg F$ and $n= \deg_XF + \deg_YF$,
and consider the homogenization $F^*(X,Y,Z) \in \bk[X,Y,Z]$ of $F$, i.e., $F^*(X,Y,Z) = Z^d F(X/Z, Y/Z)$.
Let
$
R = \bk[X_0, \dots, X_n, Y_0, \dots, Y_n, Z_0, \dots, Z_n, L] = \kk{3n+4}
$
and define $g_0, \dots, g_{nd} \in R$ by
$$
\textstyle
F^* \big( \sum_{i=0}^n X_iT^i, \sum_{i=0}^n Y_iT^i, \sum_{i=0}^n Z_iT^i  \big) - L \big( \sum_{i=0}^n Z_iT^i  \big)^d
= \sum_{i=0}^{nd} g_i T^i .
$$
Define ideals $I$ and $J$ of $R$ by stipulating that 
$I$ is generated by $g_0, \dots, g_{nd}$ and that
$J$ is generated by all $2 \times 2$ determinants
$\left| \begin{smallmatrix} X_i & X_j \\ Z_i & Z_j \end{smallmatrix} \right|$
and $\left| \begin{smallmatrix} Y_i & Y_j \\ Z_i & Z_j \end{smallmatrix} \right|$
with $0 \le i < j \le n$.
Consider the zero-sets $\bZ(I), \bZ(J) \subseteq \bk^{3n+4}$ of $I$ and $J$ respectively,
the locally closed subset $U = \bZ(I) \setminus \bZ(J)$ of $\bk^{3n+4}$
and the map $h : U \to \bk$ which is the restriction of the projection  $\bk^{3n+4} \to \bk$ on the last factor.

For $\lambda \in \bk$, the following are equivalent:
\begin{enumerate}

\item[(i)]  $\lambda \in \image h$;

\item[(ii)] there exist $(a_0, \dots, a_n), (b_0, \dots, b_n), (c_0, \dots, c_n) \in \bk^{n+1}$ such that,
if we define $x=\sum_{i=0}^n a_i T^i$, $y=\sum_{i=0}^n b_i T^i$ and $z=\sum_{i=0}^n c_i T^i$,
then $F^*(x,y,z) - \lambda z^d = 0$, $z \neq 0$ and 
$\big\{ \frac{x}{z},  \frac{y}{z} \big\} \nsubseteq \bk$;

\item[(iii)] there exist $(x,y,z) \in \bk[T]^3$ such that $\max( \deg_T x, \deg_T y, \deg_T z ) \le n$, 
$z \neq 0$, $\big\{ \frac{x}{z},  \frac{y}{z} \big\} \nsubseteq \bk$ and $F( \frac{x}{z},  \frac{y}{z} ) = \lambda$.

\end{enumerate}

Moreover, under the assumption that $F-\lambda$ is irreducible in $A$, \ref{uhh6e34qfdpc034} shows
that (iii) is equivalent to $A/(F-\lambda)$ being $\bk$-rational.

Since we assumed that (1) holds, $\image h$ is an infinite set.
As $\image h$ is a constructible subset of $\bk$, we obtain that $\bk \setminus \image h$ is a finite set.
Since $F-\lambda$ is irreducible for almost all $\lambda \in \bk$, (2) holds.
The converse is trivial.
\end{proof}

\begin{lemma} \label {f9032978y9187x4yriwedj}
Let $K \subseteq L$ be algebraically closed fields,
$X,Y$ indeterminates over $L$
and $F \in K[X,Y] \subseteq L[X,Y]$, $F \notin K$.  Then
\begin{enumerata}

\item $F$ is irreducible in $K[X,Y]$ $\iff$ $F$ is irreducible in $L[X,Y]$.

\item $K[X,Y]/(F)$ is $K$-rational $\iff$ $L[X,Y]/(F)$ is $L$-rational.

\item $F$ is a generally rational polynomial in $K[X,Y]$ \\
\mbox{\ } \hfill $\iff$ $F$ is a generally rational polynomial in $L[X,Y]$.

\end{enumerata}
\end{lemma}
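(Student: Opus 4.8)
The plan is to prove (a), (b) and (c) in turn, each using its predecessors, and to exploit throughout that $n := \deg_X F + \deg_Y F$ is computed from $F$ alone, hence is unchanged when the coefficient field is enlarged from $K$ to $L$. For (a), the implication ``irreducible in $L[X,Y]$ $\Rightarrow$ irreducible in $K[X,Y]$'' is immediate, since a nontrivial factorization $F=GH$ in $K[X,Y]$ (with $G,H\notin K$) is still nontrivial in $L[X,Y]$. For the converse I would use that, because $K$ is algebraically closed, the integral $K$-algebra $K[X,Y]/(F)$ is geometrically integral; hence its base change $L[X,Y]/(F)=\bigl(K[X,Y]/(F)\bigr)\otimes_K L$ is again a domain, so $(F)$ is prime in $L[X,Y]$ and $F$ is irreducible there.

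For (b), assume by (a) that $F$ is irreducible over both fields, and invoke the parametrization criterion \ref{uhh6e34qfdpc034}, whose degree bound depends only on $n$. If $K[X,Y]/(F)$ is $K$-rational, a witnessing triple $(x,y,z)\in K[T]^3\subseteq L[T]^3$ works verbatim over $L$: the degree bound and the identity $F(x/z,y/z)=0$ are unaffected, and the condition $\{x/z,y/z\}\nsubseteq K$ forces $\{x/z,y/z\}\nsubseteq L$, since any coordinate lies in $K(T)$ and $K(T)\cap L=K$ (because $T$ is transcendental over $L$). Conversely, a witnessing triple over $L$ is an $L$-point of the quasi-affine variety of triples satisfying (1)--(2) of \ref{uhh6e34qfdpc034}; this variety is cut out over $K$ by the vanishing of the $T$-coefficients of $F^*(x,y,z)$ and the nonvanishing of the $2\times2$ minors expressing $\{x/z,y/z\}\nsubseteq\bk$. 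Since $K$ is algebraically closed, a $K$-defined quasi-affine variety with an $L$-point has a $K$-point, which yields a triple over $K$ and hence $K$-rationality.

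For (c) I would reuse the apparatus in the proof of \ref{o83020192djwoidja}: over a field $\bk\in\{K,L\}$ the admissible $\lambda$ form the image of the projection $h\colon U\to\aff^1$, where $U=\bZ(I)\setminus\bZ(J)$, and the decisive point is that $I$ and $J$ have coefficients in $K$, being built from $F^*\in K[X,Y,Z]$ and from universal minors. Thus $\image h$ is the set of $\bk$-points of one fixed constructible $C\subseteq\aff^1$ defined over $K$; as a constructible subset of the affine line is finite or cofinite, $C(K)$ is infinite iff $C$ is cofinite iff $C(L)$ is infinite. To connect this with general rationality, \ref{o83020192djwoidja} says $F$ is generally rational over $\bk$ iff $A/(F-\lambda)$ is $\bk$-rational for infinitely many $\lambda$, and---provided $F-\lambda$ is irreducible for almost all $\lambda$---this holds iff $\image h$ is infinite, the two sets differing only by the finite reducible locus. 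The degenerate alternative, that $F-\lambda$ is reducible for all $\lambda$, occurs over $K$ iff over $L$: by (a) the reducible loci satisfy $S_K=S_L\cap K$, and by \ref{Matsudp23rwije} the ``all-$\lambda$'' case means $F=P(G)$ with $\deg P>1$, a decomposition trivially ascending from $K$ to $L$; together these give $S_K=K\iff S_L=L$, in which case $F$ is generally rational over neither field.

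The hard part will be (c), precisely because ``generally rational'' quantifies over \emph{almost all} $\lambda$ while the range of $\lambda$ is $K$ in one case and $L$ in the other, so the two conditions are not literally comparable. The device that removes this difficulty is the observation that the good values of $\lambda$ are the $\bk$-points of a \emph{single} constructible set defined over the small field $K$; since constructible subsets of $\aff^1$ are finite or cofinite, whether such a $K$-defined set is cofinite cannot change upon passing to the larger algebraically closed field $L$. The only real bookkeeping concerns reducible fibres and the exceptional case $F=P(G)$, and that is dispatched cleanly by playing part (a) and \ref{Matsudp23rwije} off against each other, as indicated above.
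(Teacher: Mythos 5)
Your proposal is correct, but for part (c) it takes a genuinely different route from the paper. The paper's proof consists of declaring (a) and (b) well known and then deducing (c) \emph{formally} from the statements of (a), (b) and \ref{o83020192djwoidja}: if $F$ is generally rational in $K[X,Y]$, the infinitely many $\lambda\in K$ with $K$-rational fiber are also elements of $L$, each such fiber is $L$-rational by (b), and \ref{o83020192djwoidja} applied over $L$ upgrades ``infinitely many'' to ``almost all''; conversely, since $K$ is infinite and only finitely many $\lambda\in L$ are bad, infinitely many $\lambda\in K$ have $L$-rational, hence by (b) $K$-rational, fibers, and \ref{o83020192djwoidja} over $K$ concludes. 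Your proofs of (a) (geometric integrality over an algebraically closed base) and (b) (pushing a witness triple of \ref{uhh6e34qfdpc034} up to $L$ via $K(T)\cap L=K$, and descending an $L$-point of the $K$-defined witness set to a $K$-point) are precisely the sort of standard arguments the paper leaves implicit, and they are fine. For (c), however, you bypass (b) entirely and instead re-open the proof of \ref{o83020192djwoidja}: the good $\lambda$ form the rational points of a single constructible subset of the line defined over $K$, constructible subsets of $\aff^1$ are finite or cofinite, and the reducible locus is controlled by (a) together with \ref{Matsudp23rwije}. This is correct---the facts you need (Chevalley's theorem and compatibility of images with extension of the base field) are standard---and it buys something: the descent mechanism is made explicit, the argument does not rely on fiberwise transfer, and you get a uniform $K$-defined description of the good locus. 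What it costs is that it duplicates the constructible-set machinery already encapsulated in \ref{o83020192djwoidja}, and it forces you to handle separately the degenerate Matsusaka case $F=P(G)$, a case distinction that is invisible in the paper's two-line derivation because there general rationality on either side already rules it out.
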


\begin{proof}
Assertions (a) and (b) are well known and easy to prove.
Assertion (c) follows from (a), (b) and \ref{o83020192djwoidja}.
\end{proof}

\begin{parag} \label {7832t8763trhxg0nfhn}
(Refer to \cite{Lang52QuasiAlgClos} for this paragraph.)
A field $K$ is said to be $C_1$ if, for every choice of integers $0<d<n$ and
every homogeneous polynomial $F(X_1, \dots, X_n) \in K[X_1, \dots, X_n]$ of degree $d$,
there exists $(a_1, \dots, a_n) \in K^n \setminus \{(0,\dots,0)\}$ satisfying $F(a_1,\dots,a_n)=0$.
Tsen's Theorem states that if $K$ is a function field in one variable over an algebraically closed field,
then $K$ is $C_1$.
Lang showed that if a field $K$ is $C_1$ then so is every algebraic extension of $K$.
It follows in particular:
\begin{quote}
\it
If $\bk$ is an algebraically closed field and $\tau$ an indeterminate over $\bk$ then
$\bk(\tau)$ is a $C_1$ field. Moreover, if $\Char\bk=p>0$ then $\bk(\tau)^{p^{-\infty}}$ is $C_1$.
\end{quote}
\end{parag}

\begin{parag} \label {823o87cry1i234eyhfdqwiue}
{\it Let $L/K$ be a function field in one variable, where $K$ is a $C_1$ field and is algebraically closed in $L$.
Then $L/K$ is rational if and only if it has genus $0$.}

Indeed, it is known that if $L/K$ has genus zero then it is the function field of a curve in $\proj^2_K$ given by
an equation $F(X,Y,Z)=0$, where $F(X,Y,Z) \in K[X,Y,Z]$ is an irreducible homogeneous polynomial of degree $2$.
As $K$ is $C_1$, the curve has a $K$-rational point, so $L/K$ has a place of degree $1$ and hence is rational.
The converse is clear.
\end{parag}

\medskip
\begin{parag} \label {9087f8912hwedjnawoe}
Let $\bk$ be an algebraically closed field and $f : X \to Y$ a dominant morphism of integral schemes of
finite type over $\bk$.  Assume that $\dim X = \dim Y$.
Then $\bk(X)/\bk(Y)$ is a finite extension of fields, where 
$\bk(X)$ and $\bk(Y)$ denote the function fields of $X$ and $Y$ respectively.
One defines
$$
\text{$\deg f = [ \bk(X) : \bk(Y)]$,\ \  $\deg_s f = [ \bk(X) : \bk(Y)]_s$\ \  and\ \  $\deg_i f = [ \bk(X) : \bk(Y)]_i$.}
$$
It is well known (cf.\ \cite[Prop.\ 9.7.8, p.\ 82]{EGA4.3} and \cite[D\'ef.\ 4.5.2, p.\ 61]{EGA4.2})
that the positive integer $d = \deg_s f$ has the following property:
\begin{quote}
\it
There exists a nonempty open subset $V \subseteq Y$ such that,
for each closed point $y \in V$, the set $f^{-1}(y)$ consists of exactly $d$ closed points of $X$.
\end{quote}
\end{parag}

The following notation is used in \ref{1po239092up938rnhfjsdu48}.
Given morphisms of schemes $X \xrightarrow{f} Y \xrightarrow{\pi} T$ and a point $P \in T$,
we write $X_P = X \times_T \Spec\kappa(P)$ and $Y_P = Y \times_T \Spec\kappa(P)$
for the fibers of $\pi \circ f$ and $\pi$ over $P$ (where $\kappa(P)$ is the residue field of $T$ at $P$).
Note the commutative diagram
$$
\xymatrix@R=13pt{
X_P \ar[r]^{f_P} \ar[d] &  Y_P \ar[r]^-{\pi_P} \ar[d] &  \Spec\kappa(P) \ar[d]  \\
X \ar[r]_{f} &  Y \ar[r]_{\pi} &  T
}
$$
in which every square is a pullback square.

\begin{lemma}  \label {1po239092up938rnhfjsdu48}
Let $\bk$ be an algebraically closed field and $X \xrightarrow{f} Y \xrightarrow{\pi} T$  dominant 
morphisms of integral schemes of finite type over $\bk$.
Suppose that $\dim X = \dim Y$ and that
\begin{equation*}
\begin{minipage}[t]{.9\textwidth}
there exists a nonempty open subset $U \subseteq T$ such that,
for each closed point $P \in U$, $X_P$ and $Y_P$ are integral schemes. 
\end{minipage}
\end{equation*}
Then there exists a nonempty open set $U' \subseteq U$ such that, for every closed point $P \in U'$,
$$
\text{$f_P : X_P \to Y_P$ is dominant, $\dim X_P = \dim Y_P$ and $\deg_s( f_P ) = \deg_s(f)$.}
$$
\end{lemma}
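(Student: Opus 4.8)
The plan is to reduce everything to two applications of the fiber-counting property recalled in \ref{9087f8912hwedjnawoe}: one for $f$, to pin down the number $d = \deg_s(f)$ via a count on a general fiber, and one for $f_P$, to identify $\deg_s(f_P)$ with the same count. The linchpin is the identification of fibers. Since $X_P = X \times_T \Spec\kappa(P)$ and $Y_P = Y \times_T \Spec\kappa(P)$, associativity of fiber products gives $X_P = X \times_Y Y_P$, so for any point $y \in Y_P$ one has $f_P^{-1}(y) = f^{-1}(y)$ as $\kappa(y)$-schemes. Moreover, if $P$ is a closed point of $T$ then $\kappa(P) = \bk$, so a closed point $y$ of $Y_P$ is a closed point of $Y$ with $\kappa(y) = \bk$; hence counting closed points of $f_P^{-1}(y)$ is literally counting closed points of $f^{-1}(y)$. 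Applying \ref{9087f8912hwedjnawoe} to $f$, I first fix a nonempty open $V \subseteq Y$ with $|f^{-1}(y)| = d := \deg_s(f)$ for every closed $y \in V$; note that $V \subseteq f(X)$ because $d \geq 1$.

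Next I would carve out $U'$. Applying the theorem on fiber dimensions (e.g.\ \cite{EGA4.3}) to the dominant morphisms $\pi \circ f$ and $\pi$ of integral $\bk$-schemes, there is a dense open $U_1 \subseteq T$ over which $\dim X_P = \dim X - \dim T$ and $\dim Y_P = \dim Y - \dim T$; since $\dim X = \dim Y$, this gives $\dim X_P = \dim Y_P$ for $P \in U_1$. Then, writing $Z = Y \setminus V$ (a proper closed subset of the irreducible scheme $Y$, so each of its components has dimension $< \dim Y$), the same fiber-dimension theorem applied to the irreducible components of $Z$ shows that over a dense open $U_2 \subseteq T$ one has $\dim(Z \cap Y_P) < \dim Y_P$; since $Y_P$ is integral, this forces $V \cap Y_P = Y_P \setminus (Z \cap Y_P) \neq \emptyset$. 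I set $U' = U \cap U_1 \cap U_2$, shrinking once more so that $Y_P \neq \emptyset$; this is a nonempty open subset of $T$.

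Finally, fix a closed point $P \in U'$. Then $X_P$ and $Y_P$ are integral and of finite type over $\kappa(P) = \bk$ with $\dim X_P = \dim Y_P$, and $f_P$ is dominant because $V \cap Y_P$ is a nonempty open subset of the irreducible scheme $Y_P$ contained in $f_P(X_P) = f(X) \cap Y_P$. Thus \ref{9087f8912hwedjnawoe} applies to $f_P$, yielding a nonempty open $V_P \subseteq Y_P$ with $|f_P^{-1}(y)| = \deg_s(f_P)$ for every closed $y \in V_P$. Choosing a closed point $y$ in the nonempty open set $V \cap Y_P \cap V_P$ and invoking the fiber identification gives $\deg_s(f_P) = |f_P^{-1}(y)| = |f^{-1}(y)| = d = \deg_s(f)$, which is exactly what is needed.

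I expect the main obstacle to be the second step, namely guaranteeing $V \cap Y_P \neq \emptyset$ for general $P$: this is where one must control $\dim(Z \cap Y_P)$ through the generic behaviour of fiber dimensions for each irreducible component of $Z$, including the degenerate case $\dim Y_P = 0$ (where one observes that no component of $Z$ can dominate $T$, since a dominant morphism from a scheme of dimension $< \dim T$ is impossible). Once this is in hand, the remaining work is bookkeeping with constructible sets together with the fiber identification $f_P^{-1}(y) = f^{-1}(y)$ established at the outset.
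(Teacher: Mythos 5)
Your proof is correct, and its skeleton matches the paper's: identify $f_P^{-1}(y)=f^{-1}(y)$, apply the counting property of \ref{9087f8912hwedjnawoe} once to $f$ and once to $f_P$, then evaluate both counts at a common general closed point of $Y_P$. Where you genuinely diverge is in the central uniformity step, namely guaranteeing that $Y_P$ meets the good open set $V$ for a general closed point $P \in T$. The paper does this in one stroke: it chooses $V$ inside $\pi^{-1}(U)$, notes that $\pi(V)$ is a dense constructible subset of $T$ (Chevalley), and takes $U'$ to be a nonempty open subset of $\pi(V)$; then $Y_P\cap V\neq\emptyset$ holds for every closed $P\in U'$ by construction. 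You instead run a dimension count on the complement $Z=Y\setminus V$, applying the generic fiber-dimension theorem to each irreducible component of $Z$ (with separate treatment of components not dominating $T$ and of the degenerate case $\dim Y_P=0$) to force $\dim(Z\cap Y_P)<\dim Y_P$. Both are legitimate; the paper's route is shorter and avoids the componentwise analysis and case distinctions, while yours buys the equality $\dim X_P=\dim Y_P$ directly from the fiber-dimension theorem applied to $\pi\circ f$ and $\pi$, whereas the paper deduces dominance and equidimensionality of $f_P$ from the fact that its fibers over a dense set of closed points are finite and nonempty. One small imprecision: your parenthetical ``$V\subseteq f(X)$ because $d\geq 1$'' gives, as stated, only that the \emph{closed} points of $V$ lie in $f(X)$; that suffices for dominance of $f_P$ (closed points are dense), and the literal inclusion is in fact true, but justifying it requires Chevalley's theorem together with the Jacobson property of finite-type $\bk$-schemes, not merely $d\geq 1$.
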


\begin{proof}
In lack of a suitable reference, we provide a proof.
For each closed point $P \in U$, $X_P$ and $Y_P$ are closed subschemes of $X$ and $Y$ respectively.
Viewing them as subsets of $X$ and $Y$, we have
$Y_P = \pi^{-1}(P)$, $X_P = (\pi \circ f)^{-1}(P) = f^{-1}( Y_P )$
and the continuous map $f_P : X_P \to Y_P$ is simply the restriction of $f$.
Note that $f_P^{-1}(y) = f^{-1}(y)$ for all $y \in Y_P$.

Let $d = \deg_s(f)$ and choose a nonempty open set $V \subseteq \pi^{-1}(U)$ such that,
for each closed point $y \in V$, the set $f^{-1}(y)$ consists of exactly $d$ closed points of $X$ 
(cf.\ \ref{9087f8912hwedjnawoe}).
Then $\pi(V)$ is dense in $T$ and hence contains a nonempty open subset $U'$ of $T$.
Note that $U' \subseteq U$.

Let $P$ be a closed point of $U'$.  Then $Y_P \cap V \neq \emptyset$ (because $U'\subseteq \pi(V)$) and,
for every closed point $y \in Y_P \cap V$,
the set $f_P^{-1}(y)$ consists of exactly $d$ closed points of $X_P$.
Since $f_P : X_P \to Y_P$ is a morphism of integral schemes of finite type over $\bk$,
it follows that $f_P$ is dominant, that $\dim X_P = \dim Y_P$ and (by \ref{9087f8912hwedjnawoe} again)
that $\deg_s( f_P ) = d$, as desired.
\end{proof}

The following result is proved in paragraphs 2.8--3.3 of \cite{Rus:fg}.

\begin{lemma}  \label {x92938up92fjeidfq}
Let $\bk$ be an algebraically closed field, $A=\kk2$ and $F \in A \setminus \bk$.
Assume that $\bk(F)$ is algebraically closed in $\Frac A$ and let $g$ denote the genus
of the function field $\Frac A\,/\, \bk(F)$.
Then, for any diagram \eqref{7y74r47r77392392093jeij} as in definition~\ref{ij920201e9d2jxd}, the following holds:
\begin{equation*}  
\begin{minipage}[t]{.9\textwidth}
For almost all closed points $P \in \proj^1$, 
the arithmetic genus of the curve $\bar f^{-1}(P)$ is equal to~$g$.
\end{minipage}
\end{equation*}
\end{lemma}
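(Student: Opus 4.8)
The plan is to regard $\bar f : X \to \proj^1$ as a flat projective family of curves and to deduce the constancy of the arithmetic genus from the constancy of the Euler characteristic of the fibres, computing the common value on the generic fibre where the hypothesis on $\bk(F)$ comes into play.

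First I would check that $\bar f$ is flat. Since $X$ is a nonsingular projective surface it is integral, $\proj^1$ is a regular one-dimensional scheme, and $\bar f$ is dominant because it extends the dominant morphism $f$. For a closed point $P \in \proj^1$ the local ring $\Oeul_{\proj^1,P}$ is a discrete valuation ring $R$, and for every $x \in \bar f^{-1}(P)$ the local ring $\Oeul_{X,x}$ is a domain into which $R$ injects (as $\bar f$ is dominant); hence $\Oeul_{X,x}$ is torsion-free over $R$ and therefore flat over $R$. Over the generic point flatness is automatic since the base is a field. Thus $\bar f$ is flat, and it is projective because $X$ is projective over $\bk$. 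I would then invoke the standard fact that in a flat projective family the Hilbert polynomial of the fibres (with respect to a fixed relatively very ample sheaf) is locally constant on the base; its value at $0$ is $\chi(X_P,\Oeul_{X_P})$, so this Euler characteristic is locally constant. As $\proj^1$ is connected, there is an integer $c$ with $\chi(X_P,\Oeul_{X_P}) = c$ for every point $P \in \proj^1$, where $X_P = X \times_{\proj^1}\Spec\kappa(P)$ is the scheme-theoretic fibre and $\chi$ is taken over $\kappa(P)$.

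Next I would evaluate $c$ on the generic fibre. Writing $K = \bk(F) = \kappa(\eta)$, the generic fibre $X_\eta$ is a projective curve over $K$ with function field $\Frac A$, and it is regular because it is a localization of the regular scheme $X$; hence $X_\eta$ is the regular projective model of $\Frac A/K$. By hypothesis $K$ is algebraically closed in $\Frac A$, which forces $H^0(X_\eta,\Oeul_{X_\eta}) = K$, and by definition the genus $g$ of $\Frac A/\bk(F)$ is $\dim_K H^1(X_\eta,\Oeul_{X_\eta})$. Therefore $c = \chi(\Oeul_{X_\eta}) = 1 - g$. Finally, for every closed point $P$ the arithmetic genus of the scheme-theoretic fibre is $1 - \chi(X_P,\Oeul_{X_P}) = 1 - c = g$; and for almost all closed points $P$ the fibre $\bar f^{-1}(P)$ is an integral curve (as recorded in \ref{ij920201e9d2jxd}), so for such $P$ the arithmetic genus of the curve $\bar f^{-1}(P)$ agrees with that of the scheme-theoretic fibre and equals $g$, which is the assertion.

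The step I expect to be the main obstacle is the identification in the third paragraph: confirming that the generic fibre $X_\eta$ is exactly the regular projective model defining the genus of the function field, and that the hypothesis ``$\bk(F)$ algebraically closed in $\Frac A$'' yields $H^0(X_\eta,\Oeul_{X_\eta}) = K$ so that $\chi(\Oeul_{X_\eta}) = 1 - g$. In positive characteristic one must be careful that here ``genus of the function field'' means $\dim_K H^1(\Oeul)$ of the regular model and not a notion requiring smoothness over $K$; once this identification is secured, the flatness and the constancy of $\chi$ are entirely standard inputs.
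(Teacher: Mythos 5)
Your proof is correct, and it takes a genuinely different route from the paper's. The paper first argues that the general-fibre arithmetic genus is independent of the choice of diagram \eqref{7y74r47r77392392093jeij} (any two diagrams are reconciled by finitely many extra blowings-up, which affect only finitely many fibres), and then verifies the statement for one particular diagram: the resolution of the base points of the pencil generated by $F^*$ and $Z^d$ in $\proj^2$. There it quotes Russell's computation (\cite{Rus:fg}, paragraphs 2.8--3.3) of the genus of $\Frac A\,/\,\bk(F)$, namely $(d-1)(d-2)/2-\sum_{Q}\mu(Q)(\mu(Q)-1)/2$, and observes that this number is visibly the arithmetic genus of a general member of the resolved pencil. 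You instead handle an arbitrary diagram directly: flatness of $\bar f$ (a dominant morphism from an integral scheme to a regular one-dimensional base), local constancy of $\chi(\Oeul_{X_P})$ in a flat projective family, and evaluation of the constant on the generic fibre, where regularity of $X_\eta$ together with $H^0(X_\eta,\Oeul_{X_\eta})=\bk(F)$ (this is exactly where the algebraic-closedness hypothesis enters) gives $\chi(\Oeul_{X_\eta})=1-g$. The point you flagged as delicate is indeed the crux, and your resolution of it is the right one: in characteristic $p>0$ the regular curve $X_\eta$ need not be smooth over $\bk(F)$, but the genus of a function field in one variable with full constant field $K$ (the notion used throughout the paper, consistent with its reference to \cite{StichtenothBook}) equals $\dim_K H^1$ of its regular projective model, so no smoothness is required. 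What your route buys is independence from Russell's explicit resolution computations and the elimination of the diagram-independence reduction; what the paper's route buys is an explicit formula for $g$ in terms of the degree and the base-point multiplicities, tying the lemma to the literature it builds on.
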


\begin{proof}
Since this fact is not explicit in \cite{Rus:fg}, we fill the gaps.
Choose a diagram \eqref{7y74r47r77392392093jeij}.
The assumption that $\bk(F)$ is algebraically closed in $\Frac A$ 
implies that,
for almost all closed points $P \in \proj^1$, $\bar f^{-1}(P)$ is an integral curve over $\bk$.
Note that the number
``arithmetic genus of $\bar f^{-1}(P)$ for a general closed point $P \in \proj^1$''
is independent of the choice of a diagram \eqref{7y74r47r77392392093jeij}
(any two diagrams can be reconciled after finitely many extra blowings-up, and these
blowings-up affect only finitely many fibers $\bar f^{-1}(P)$).
So it's enough to show that at least one diagram \eqref{7y74r47r77392392093jeij} has the desired property.

Choose $x,y$ such that $A=\bk[x,y]$, let $d = \deg(F)$ (with respect to $x,y$),
let $F^* \in \bk[x,y,z]$ be the homogenization of $F$, and consider the pencil
$\Lambda(F) =  \setspec{ \div_0( aF^* + bZ^d ) }{ (a:b) \in \proj^1 }$ on $\proj^2$
(where we write $\div_0(H)$ for the divisor of zeroes of a homogeneous polynomial $H \in \bk[X,Y,Z] \setminus \{0\}$).
The assumption that $\bk(F)$ is algebraically closed in $\Frac A$ implies that the general member of $\Lambda(F)$
is irreducible and reduced.
Let $B$ be the set of base points of $\Lambda(F)$, including infinitely near ones.
Then $B$ is a finite set.
Let $\pi : X \to \proj^2$ be the blowing-up of $\proj^2$ along $B$ (i.e., resolve the base points of $\Lambda(F)$);
then $X$ is a nonsingular projective surface,
$\pi$ is a birational morphism centered at points of $\proj^2 \setminus \aff^2$
and the strict transform of $\Lambda(F)$ on $X$ is free of base points.
This base point free pencil determines a morphism $\bar f : X \to \proj^1$;
by restricting $\pi$ we get an isomorphism $\pi^{-1}(\aff^2) \to \aff^2$,
whose inverse defines an open immersion $\aff^2 \hookrightarrow X$;
so we have constructed a diagram \eqref{7y74r47r77392392093jeij}.
By paragraphs 2.8--3.3 of \cite{Rus:fg}, 
the genus $g$ of the function field $\Frac A\,/\, \bk(F)$ is equal to 
\begin{equation} \label {ff92d039uejeori102}
(d-1)(d-2)/2 - \sum_{Q\in B} \mu(Q)(\mu(Q)-1)/2,
\end{equation}
where $\mu(Q)$ is the multiplicity of the base point $Q$, i.e., the multiplicity of $Q$ on the general member of
a suitable strict transform of $\Lambda(F)$ (refer to \cite{Rus:fg} for details). 
Clearly, the number \eqref{ff92d039uejeori102} is equal to 
the arithmetic genus of $\bar f^{-1}(P)$ for a general closed point $P \in \proj^1$.
\end{proof}

\section{Proofs}
\label {Sec:Proofs}

Throughout this section, $\bk$ is an algebraically closed field and $F \in A = \kk2$. 
We also consider the $\bk(F)$-algebra $\Aeul = S^{-1}A$ where $S = \bk[F] \setminus \{0\}$.
Define $q=1$ if $\Char\bk=0$, and $q=p$ if $\Char\bk=p>0$.

Given $\bk$-domains $B \subseteq C$ and $x \in C$, the phrase ``$x$ is purely inseparable over $B$''
means that there exists $n\in\Nat$ such that $x^{q^n} \in B$ (if $\Char\bk=0$, this means that $x \in B$).
We also define ``$C$ is purely inseparable over $B$'' to mean that each element of $C$ is purely inseparable over $B$.
When $B$ and $C$ are fields, these definitions coincide with the usual ones.

Let us also remark that if $F/E$ is a purely inseparable extension of fields, 
$\Oeul$ a valuation ring of $F$ and $\Oeul' = \Oeul \cap E$,
then $\Oeul$ is purely inseparable over $\Oeul'$  and consequently
the residue field of $\Oeul$ is a purely inseparable extension of that of $\Oeul'$;
moreover, every valuation ring of $E$ has a {\bf unique} extension to a valuation ring of $F$.

\medskip
\begin{proof}[Proof that \ref{o9823ry91c83yrquwei}\eqref{989kjbgcdsaw4567u9jdf} implies
\ref{o9823ry91c83yrquwei}\eqref{982373253hhkj947}.]
Suppose that \ref{o9823ry91c83yrquwei}\eqref{989kjbgcdsaw4567u9jdf} holds.
Choose $G \in \Frac A$ such that $\Frac A$ is purely inseparable over $\bk(F,G)$.

Consider any $W \in A$ such that $\bk[F] \subseteq \bk[W] \subset A$.
Then $W$ is integral over $\bk[F]$ and  $W^{q^n} \in \bk(F,G)$ for some $n$.
Since $W^{q^n} \in \bk(F,G)$ and $W^{q^n}$ is integral over $\bk[F]$, we have 
$W^{q^n} \in \bk[F]$ and hence  $\bk[W^{q^n}] \subseteq \bk[F] \subseteq \bk[W]$;
by assumption, $F \notin A^p$ if $\Char\bk=p>0$; so $\bk[F]=\bk[W]$.
Consequently, \ref{Matsudp23rwije} implies:
\begin{center}
$F-\lambda$ is irreducible in $A$ for almost all $\lambda \in \bk$.
\end{center}

Choose $H \in A \setminus \{0\}$ such that $\bk[F,G] \subseteq A_H$.

For almost all $\lambda \in \bk$, 
$F-\lambda$ is irreducible in $A$ and $F-\lambda \nmid H$ in $A$;
for each such $\lambda$, $F-\lambda$ is irreducible in $A_H$.
Consequently, the morphisms of schemes $\Spec A_H \to \Spec \bk[F,G] \to \Spec \bk[F]$
(determined by the inclusions $A_H \supseteq \bk[F,G] \supset \bk[F]$)
satisfy the hypothesis of \ref{1po239092up938rnhfjsdu48}.
This implies that there exists a subset $U$ of $\bk$ such that $\bk \setminus U$ is a finite set and,
for all $\lambda \in U$,
\begin{gather}
\notag \text{$\bk[F,G]/(F-\lambda)\bk[F,G] \to A_H/(F-\lambda)A_H$ is injective and} \\
\label {290eh07c481yp9284dj} 
\big[ L_\lambda : K_\lambda \big]_s = \big[ \Frac A_H : \bk(F,G) \big]_s = \big[ \Frac A : \bk(F,G) \big]_s = 1
\end{gather}
where we set $L_\lambda = \Frac\big( A_H/(F-\lambda)A_H \big)$
and  $K_\lambda = \Frac\big( \bk[F,G]/(F-\lambda)\bk[F,G] \big)$.
For each $\lambda \in U$ we have  $\bk \subset K_\lambda \subseteq L_\lambda$ where each of $K_\lambda$, $L_\lambda$ 
is a function field in one variable over the algebraically closed field $\bk$ and,
by \eqref{290eh07c481yp9284dj}, $L_\lambda / K_\lambda$ is purely inseparable;
thus  \cite[Ch.\ IV, 2.5]{Hartshorne} implies that $L_\lambda/\bk$ and $K_\lambda/\bk$ have the same genus,
which is $0$ since $K_\lambda = \bk^{(1)}$. Hence, 
$$
L_\lambda = \bk^{(1)} \quad \text{for almost all $\lambda \in \bk$.}
$$
Now $L_\lambda = \Frac\big( A_H/(F-\lambda)A_H \big) = \Frac\big( A/(F-\lambda)A \big)$,
so $A/(F-\lambda)A$ is $\bk$-rational for almost all $\lambda \in \bk$, i.e.,
we have shown that \ref{o9823ry91c83yrquwei}\eqref{989kjbgcdsaw4567u9jdf} implies
\ref{o9823ry91c83yrquwei}\eqref{982373253hhkj947}.
\end{proof}

\medskip
\begin{proof}[Proof that \ref{o9823ry91c83yrquwei}\eqref{982373253hhkj947} implies
\ref{o9823ry91c83yrquwei}\eqref{989kjbgcdsaw4567u9jdf}.]
Let $F$ be a generally rational polynomial in $A$.
The assumption implies, in particular, that there exists $\lambda \in \bk$ such that $F-\lambda$ is irreducible
in $A$; so if $\Char\bk=p>0$ then $F \notin A^p$ (which is part of the desired conclusion).
Let $\tau$ be an indeterminate over $\bk$, let $\overline K$ be an algebraic closure of $\bk(\tau)$ and let
$$
K  = \setspec{ x \in \barr K }{ \text{$x$ is purely inseparable over $\bk(\tau)$} } = \begin{cases}
\bk(\tau), & \text{if $\Char\bk=0$}, \\
\bk(\tau)^{p^{-\infty}}, & \text{if $\Char\bk=p>0$.}
\end{cases}
$$
Then $\bk(\tau) \subseteq K \subset \barr K$ and 
$$
F \in A =\bk[X,Y] \subset \bk(\tau)[X,Y] \subseteq K[X,Y] \subset \barr K[X,Y].
$$

Applying \ref{f9032978y9187x4yriwedj} to 
$F \in \bk[X,Y] \subset \barr K[X,Y]$ shows that $F$ is a generally rational polynomial in $\barr K[X,Y]$.
Then almost all $\lambda \in \barr K$ are such that
$\barr K[X,Y]/(F-\lambda)$ is $\barr K$-rational.
Since $\setspec{\tau +\lambda}{ \lambda \in \bk }$ is an infinite subset of $\barr K$,
there exists $\lambda \in \bk$ such that
$\barr K[X,Y]/(F-\tau-\lambda)$ is $\barr K$-rational.
There exists a $\bk$-automorphism $\theta$ of $\barr K$ which sends $\tau +\lambda$ on $\tau $.
Extend $\theta$ to a $\bk$-automorphism $\Theta$ of $\barr K[X,Y]$ such that
$\Theta(X)=X$ and $\Theta(Y)=Y$. Then $\Theta : \barr K[X,Y] \to \barr K[X,Y]$ is an
isomorphism of rings satisfying $\Theta( \barr K) = \barr K$ and $\Theta(F-\tau -\lambda) = F-\tau $;
it induces an isomorphism of rings 
$$
\barr K[X,Y]/(F-\tau -\lambda) \to \barr K[X,Y]/(F-\tau)
$$
which maps $\barr K$ onto itself.
As $\barr K[X,Y]/(F-\tau -\lambda)$ is $\barr K$-rational,
\begin{equation} \label {pf192p39r23jwdji}
\text{$\barr K[X,Y]/(F-\tau)$ is $\barr K$-rational}.
\end{equation}
This implies, in particular, that $F-\tau$ is irreducible in $\barr K[X,Y]$; then
it is also irreducible in $K[X,Y]$ and in $\bk(\tau)[X,Y]$.
Moreover, 
\begin{align*}
(F-\tau)\barr K[X,Y] \cap K[X,Y] & = (F-\tau) K[X,Y], \\ 
(F-\tau)K[X,Y] \cap \bk(\tau)[X,Y] & = (F-\tau) \bk(\tau)[X,Y] 
\end{align*}
because, say,  $\bk(\tau)[X,Y] \to K[X,Y] \to \barr K[X,Y]$ are faithfully flat homomorphisms
(if $R \to S$ is a faithfully flat homomorphism and $I$ is an ideal of $R$ then $IS \cap R = I$).
So there is a commutative diagram of integral domains and injective homomorphisms
\begin{equation}  \label {9f923p0912p093r}
\xymatrix{
\barr K \ar[r]   
&   \barr R  \ar[r]  
&   M   
& \barr R = \barr K[X,Y]/(F-\tau), \quad M = \Frac \barr R   \\
K  \ar[r] \ar[u]  &   R  \ar[r] \ar[u]  &   L \ar[u] 
& R = K[X,Y]/(F-\tau), \quad L = \Frac R \\
\bk(\tau)  \ar[r] \ar[u]  & R_0  \ar[r] \ar[u]  &   L_0 \ar[u]
& R_0 = \bk(\tau)[X,Y]/(F-\tau)  , \quad L_0 = \Frac R_0 .
}
\end{equation}
Applying the exact functor $\barr K \otimes_K (\underline{\ \,})$ to 
$0 \to (F-\tau) \to K[X,Y] \to R \to 0$ yields
$0 \to (F-\tau) \to \barr K[X,Y] \to \barr R \to 0$, and this shows that
$\barr R = \barr K \otimes_K R$.
Note that $L=\Sigma^{-1}R$ where  $\Sigma = R\setminus\{0\}$.
Since $\barr K$ is integral over $K$ and $\barr R = \barr K \otimes_K R$, $\barr R$  is integral over $R$
and consequently $\Sigma^{-1}\barr R$ is integral over $\Sigma^{-1}R$ ($=L$); so $\Sigma^{-1}\barr R$ is a field, i.e.,
$\Sigma^{-1}\barr R = M$. So we have shown that 
$\barr R = \barr K \otimes_K R$ and $M = \barr R \otimes_R L$.
The same argument shows that 
$R = K \otimes_{\bk(\tau)} R_0$ and $L = R \otimes_{R_0} L_0$.
This can be summarized by saying that the four little squares, in diagram~\eqref{9f923p0912p093r}, are
pushout squares; so
\begin{equation}  \label {fp2ip3r91998xp2n8rmhe}
\text{all nine  squares, in \eqref{9f923p0912p093r}, are pushout squares.}
\end{equation}
The following fact is well known: {\it suppose that $B,F,X,Y$ are rings,
\begin{equation*}
\xymatrix{
F \ar[r] &  Y  \\
B  \ar[r] \ar[u]  &  X \ar[u]  
}
\end{equation*}
is a pushout square (i.e., $X \otimes_B F = Y$) in which all arrows are injective homomorphisms of rings,
$F$ is a free $B$-module and there exists a basis $\Beul$ of $F$ over $B$ such that $1 \in \Beul$;
then $Y$ is a free $X$-module, there exists a basis $\Beul'$ of $Y$ over $X$ such that $1 \in \Beul'$
and $F \cap X=B$, when we view $B,F,X$ as subsets of $Y$.} Applying this to 
\eqref{9f923p0912p093r} and \eqref{fp2ip3r91998xp2n8rmhe} gives, in particular:
\begin{equation} \label {pf92p39rupidi}
\barr K \cap L = K,
\quad \barr K \cap L_0 = \bk(\tau),
\quad K \cap L_0 = \bk(\tau),
\quad \barr R \cap L = R \text{\ \ and\ \ } R \cap L_0 = R_0.
\end{equation}
In view of the fact that $\barr K$ is algebraically closed in $M$, 
the equalities $\barr K \cap L = K$ and $\barr K \cap L_0 = \bk(\tau)$ imply:
\begin{equation} \label {5923ueip23fnijq3}
\text{$K$ is algebraically closed in $L$ and $\bk(\tau)$ is algebraically closed in $L_0$}.
\end{equation}

Note that $K$ is purely inseparable over $\bk(\tau)$; since $L = K \otimes_{\bk(\tau)} L_0$, $L$ is the compositum
$KL_0$ and it follows that $L$ is purely inseparable over $L_0$; since $R \cap L_0 = R_0$, we obtain that 
$R$ is purely inseparable over $R_0$.
We record this:
\begin{equation} \label {2p93pc892h3pfued}
\text{$L$ (resp.\ $R$) is purely inseparable over $L_0$ (resp.\ $R_0$)}.
\end{equation}

Observe in particular that the following assertions are true:
\begin{enumerate}

\item[(i)] $M/\barr K$ is a function field in one variable and $\barr K$ is algebraically closed in $M$;

\item[(ii)] $L/K$ is a function field in one variable and $K$ is algebraically closed in $L$;

\item[(iii)] the compositum of fields $\barr K L$ is equal to $M$;

\item[(iv)] $M$ is an algebraic extension of $L$;

\item[(v)] $K$ is perfect.

\end{enumerate}
By \cite[Thm~III.6.3]{StichtenothBook}, conditions (i--v) imply that $M/\barr K$ has the same genus as $L/K$;
as $M = \barr K^{(1)}$ by \eqref{pf192p39r23jwdji}, that genus is $0$.
Now $K$ is a $C_1$ field by \ref{7832t8763trhxg0nfhn};
so \ref{823o87cry1i234eyhfdqwiue} yields:
\begin{equation} \label {9f92039c32cruf}
L=K^{(1)}.
\end{equation}
Choose $v \in L$ such that $L=K(v)$.
If $\Char\bk=0$, define $g=v$; 
if $\Char\bk=p>0$, define  $g = v^{p^n}$ where $n \in \Nat$ is large enough to have $v^{p^n} \in L_0$.
Then in both cases we have $g \in L_0$, and we claim:
\begin{equation} \label {923r9982r1hd18rnhdnh}
\text{$L_0$ is a purely inseparable extension of $\bk(\tau,g)$.}
\end{equation}

Indeed, if $\Char\bk=0$ then $L_0=L=K(v)=\bk(\tau,v)=\bk(\tau,g)$, so \eqref{923r9982r1hd18rnhdnh} holds.
Assume that $\Char\bk=p>0$.
We use the following notation.  Given $s \in \Nat$ and a polynomial 
$P(T) = \sum_i a_i T^i \in K[T] = K^{[1]}$ (where $a_i \in K$), let $P^{(p^s)}(T) = \sum_i a_i^{p^s} T^i \in K[T]$.
Note that $P^{(p^s)}(T) \in \bk(\tau)[T]$ if $s$ is large enough.

Let $\xi \in L_0$. 
Then $\xi \in L = K(v)$, so $\xi = P(v)/Q(v)$ for some $P(T), Q(T) \in K[T]$, $Q(T) \neq 0$.
Choose $s \ge n$ large enough to have $P^{(p^s)}(T), Q^{(p^s)}(T) \in \bk(\tau)[T]$.
Then 
$$
\xi^{p^s}
=  P^{(p^s)}(v^{p^s}) / Q^{(p^s)}(v^{p^s}) 
=  P^{(p^s)}(g^{p^{s-n}}) / Q^{(p^s)}(g^{p^{s-n}}) \in \bk(\tau,g),
$$
showing that $\xi$ is purely inseparable over $\bk(\tau,g)$. This proves \eqref{923r9982r1hd18rnhdnh}.

Finally, we note that there is a commutative diagram:
\begin{equation} \label {9823r723txhdi2ed}
\xymatrix{
\bk(F) \ar @{^{(}->} [r]  &  \Aeul \ar @{^{(}->} [r]   &  \bk(X,Y) \ar @{=}[r] & \Frac A  \\
\bk(\tau) \ar @{^{(}->} [r] \ar[u]^-{\isom}   &  R_0 \ar @{^{(}->} [r] \ar[u]^-{\isom}  &  L_0 \ar[u]^-{\isom}_-{\phi} 
}
\end{equation}
where the vertical arrows are $\bk$-isomorphisms that send $\tau$ to $F$
and where $\Aeul = S^{-1}A$, $S = \bk[F] \setminus \{0\}$.
Let $g \in L_0$ be as before and
let $G = \phi(g) \in \bk(X,Y)$.
Then \eqref{923r9982r1hd18rnhdnh} implies that $\bk(X,Y)$ is purely inseparable over $\bk(F,G)$.

This shows that \ref{o9823ry91c83yrquwei}\eqref{982373253hhkj947} implies
\ref{o9823ry91c83yrquwei}\eqref{989kjbgcdsaw4567u9jdf}
and completes the proof of \ref{o9823ry91c83yrquwei}.
\end{proof}

All facts established in the proof of
\ref{o9823ry91c83yrquwei}\eqref{982373253hhkj947} $\Rightarrow$ \ref{o9823ry91c83yrquwei}\eqref{989kjbgcdsaw4567u9jdf}
are valid whenever $F$ is a generally rational polynomial in $A$.
This is used in several of the proofs below.

\medskip
\begin{proof}[Proof of \ref{q9830732847ryt23rhf}]
If \eqref{k3kk34k2j3ki3jo3i4uy7} or \eqref{8788sx88x8x8x878x7x8x7f} holds then $F$ is a generally rational
polynomial in $A$ (this is obvious if \eqref{k3kk34k2j3ki3jo3i4uy7} holds and
is a consequence of \ref{o9823ry91c83yrquwei} if \eqref{8788sx88x8x8x878x7x8x7f} holds,
since a field generator in $A$ cannot belong to $A^p$ if $\Char\bk=p>0$).
So, to prove the theorem, we may assume throughout that $F$ is a generally rational polynomial in $A$.

Let $g$ denote the genus of the function field $\Frac A\,/\, \bk(F)$ and note that $\bk(F)$ is
algebraically closed in $\Frac A$ (for instance by \eqref{5923ueip23fnijq3} and \eqref{9823r723txhdi2ed},
which are valid here since  $F$ is a generally rational polynomial in $A$).
Now $F$ is a field generator if and only if $\Frac A = \bk(F)^{(1)}$, and this is equivalent to $g=0$ by 
\ref{7832t8763trhxg0nfhn} and \ref{823o87cry1i234eyhfdqwiue}.
So it's enough to show:
\begin{equation} \label {if23i23id9ijqdmn}
\text{$g=0$ if and only if $(F,A)$ does not have moving singularities.}
\end{equation}

Choose a diagram \eqref{7y74r47r77392392093jeij} as in definition~\ref{ij920201e9d2jxd}.
Then, for almost all closed points $P \in \proj^1$, $\bar f^{-1}(P)$ is an integral curve over $\bk$.
By \ref{x92938up92fjeidfq}, 
\begin{equation*}  \label {nvb9f20930192udoiwjda}
\begin{minipage}[t]{.9\textwidth}
for almost all closed points $P \in \proj^1$, 
the arithmetic genus of $\bar f^{-1}(P)$ is equal to $g$.
\end{minipage}
\end{equation*}
So, keeping in mind that $\bar f^{-1}(P)$ is rational, we see that
$g=0$
iff
the arithmetic genus of $\bar f^{-1}(P)$ is equal to $0$ for almost all closed points $P \in \proj^1$,
iff
$\bar f^{-1}(P)$ is nonsingular for almost all closed points $P \in \proj^1$,
iff
$(F,A)$ does not have moving singularities, proving \eqref{if23i23id9ijqdmn}.
\end{proof}

\medskip
\begin{proof}[Proof of \ref{9230r9984tyqrawdc9r0fhd}]
Let $F$ be a pseudo field generator in $A$.
If $\Char\bk=p>0$ then $F$ is good if and only if $F^p$ is good, and it is easy to check that $F$ and $F^p$ have
exactly the same set of dicriticals and that a given dicritical is a p.i.\ dicritical of $F$ iff it is a
p.i.\ dicritical of $F^p$;
so, to prove \ref{9230r9984tyqrawdc9r0fhd} in characteristic $p>0$,
we may (and shall) assume that $F \notin A^p$. 
Then, by \ref{o9823ry91c83yrquwei}, $F$ is a generally rational polynomial in $A$.
Consequently, all facts established in the proof of
\ref{o9823ry91c83yrquwei}\eqref{982373253hhkj947} $\Rightarrow$ \ref{o9823ry91c83yrquwei}\eqref{989kjbgcdsaw4567u9jdf}
remain valid here.

Suppose that $F$ is good.
Then there exists $G \in A$ such that $\Frac A$ is purely inseparable over $\bk(F,G)$.
Let $(\Reul,\mgoth)$ be the unique valuation ring of $\bk(F,G)/\bk(F)$ such that $G \notin \Reul$
and note that $\Reul/\mgoth = \bk(F)$.
Since $\Frac A$ is purely inseparable over $\bk(F,G)$, it follows that $(\Reul,\mgoth)$ extends uniquely 
to a valuation ring $(\Seul,\ngoth)$ of $\Frac A/ \bk(F)$ and that $\Seul/\ngoth$ is a purely inseparable
extension of  $\Reul/\mgoth = \bk(F)$. Then $\Seul \in \proj_\infty( \Aeul/\bk(F) )$ is a purely inseparable
dicritical of $F$ (where $\Aeul = S^{-1}A$, $S = \bk[F] \setminus \{0\}$, as before),
proving that  \ref{9230r9984tyqrawdc9r0fhd}\eqref{928fhnc1bci3urh}
implies \ref{9230r9984tyqrawdc9r0fhd}\eqref{432kln9c745jdie}.

For the converse, begin by observing that
the isomorphism $\phi : L_0 \to \Frac A$ of \eqref{9823r723txhdi2ed} satisfies
$\phi^{-1}(\Aeul)=R_0$ and $\phi^{-1}(\bk(F))=\bk(\tau)$.
Suppose that $F$ has at least one purely inseparable dicritical  $\Seul \in \proj_\infty( \Aeul/\bk(F) )$.
Then $\phi^{-1}( \Seul )$ is an element of $\proj_\infty( R_0 / \bk(\tau) )$  which we denote
$(\Oeul_0, \mgoth_0)$; $\Seul$ being a purely inseparable dicritical, the residue field of $\Seul$ is 
purely inseparable over $\bk(F)$ and consequently
$\Oeul_0 / \mgoth_0$ is a purely inseparable extension of $\bk(\tau)$.
As (by \eqref{2p93pc892h3pfued}) $L$ is purely inseparable over $L_0$, 
$(\Oeul_0, \mgoth_0)$ extends uniquely to a valuation ring $(\Oeul, \mgoth)$  of $L$ over $K$
and  $\Oeul/\mgoth$  is a purely inseparable extension of $\Oeul_0/\mgoth_0$:
$$
\xymatrix{
K \ar[r]  &  \Oeul / \mgoth  \\
\bk(\tau)  \ar[r]_-{\text{p.i.}} \ar[u]^{\text{p.i.}}  &  \Oeul_0 / \mgoth_0 \ar[u]_{\text{p.i.}} 
}
$$
Then $\Oeul / \mgoth$ is purely inseparable over $K$.
Since $K$ is perfect, $\Oeul / \mgoth = K$.
Since $L=K^{(1)}$ by \eqref{9f92039c32cruf}, it follows that the ring
$$
\Reul = \bigcap \big( \proj(L/K) \setminus \{ \Oeul \} \big)
$$
satisfies $\Reul = K^{[1]}$ and $L=\Frac \Reul$.
Choose $v$ such that $\Reul = K[v]$.
Then $L=K(v)$, so if we define $g \in L_0$ as in the proof of \ref{o9823ry91c83yrquwei}
(see just before \eqref{923r9982r1hd18rnhdnh}),
and if we take $G = \phi(g) \in \Frac A$, then the proof of 
\ref{o9823ry91c83yrquwei}\eqref{982373253hhkj947} $\Rightarrow$ \ref{o9823ry91c83yrquwei}\eqref{989kjbgcdsaw4567u9jdf}
shows that $\Frac A$ is purely inseparable over $\bk(F,G)$.
Note that $g = v^{q^n}$ for some $n \in \Nat$, so $g \in \Reul$.

Since $L_0 \subseteq L$ and $\bk(\tau) \subseteq K$, we have a well defined map
\begin{equation}  \label {pf9u2p93pc1nudqwdas}
\proj(L/K) \to \proj(L_0/\bk(\tau)), \quad B \mapsto B \cap L_0;
\end{equation}
this map is surjective because $K/\bk(\tau)$ is an algebraic extension;
it is injective because $L/L_0$ is purely inseparable; so \eqref{pf9u2p93pc1nudqwdas} is bijective.
It follows that the image of $\proj(L/K) \setminus \{ \Oeul \}$ by that map is equal to
$\proj(L_0/\bk(\tau)) \setminus \{ \Oeul_0 \}$, and this implies that
\begin{equation} \label {87h8s9xc8v6ruh}
\Reul \cap L_0 = \bigcap \big( \proj(L_0/\bk(\tau)) \setminus \{ \Oeul_0 \} \big).
\end{equation}
As $\proj(L_0/\bk(\tau)) \setminus \{ \Oeul_0 \} \supseteq \proj_{\text{\rm fin}}(R_0/\bk(\tau))$, we get
\begin{equation} \label {7y4hn30kfmqjcj}
\bigcap \big( \proj(L_0/\bk(\tau)) \setminus \{ \Oeul_0 \} \big)
\subseteq \bigcap \proj_{\text{\rm fin}}(R_0/\bk(\tau)) = \overline R_0,
\end{equation}
where  $\overline R_0$ is the integral closure of $R_0$ in $L_0$.
In view of diagram~\eqref{9823r723txhdi2ed} and of the fact that $\Aeul$ is integrally closed in $\Frac A$,
we see that $R_0$ is a normal domain,
so $\overline R_0=R_0$ and hence (by \eqref{87h8s9xc8v6ruh} and \eqref{7y4hn30kfmqjcj}) $\Reul \cap L_0 \subseteq R_0$.
As $g \in \Reul \cap L_0$, we have $G = \phi(g) \in \phi(R_0) = \Aeul = S^{-1}A$.
Multiplying $G$ by a suitable element of $S = \bk[F]\setminus\{0\}$ gives an element $G' \in A$,
and since $\bk(F,G') = \bk(F,G)$, $\Frac A$ is purely inseparable over $\bk(F,G')$.
So $F$ is good, and this completes the proof of \ref{9230r9984tyqrawdc9r0fhd}.
\end{proof}

Before proving  \ref{020399f09fjcnEH1283}, we need a definition and a lemma.
See \ref{q9nc29991209fh27gdaj} for the notation.

\begin{definition} \label {9239f9cn8d3ndha}
We say that a function field in one variable $F/E$ {\it has property $(*)$} if:
\begin{equation} \tag{$*$}
\begin{minipage}[t]{.9\textwidth}
For any choice of distinct elements $\Oeul_1, \Oeul_2 \in \proj(F/E)$,
there exists $\xi \in F \setminus E$ such that $\supp(\div \xi) = \{ \Oeul_1, \Oeul_2 \}$.
\end{minipage}
\end{equation}
\end{definition}

We leave it to the reader to check that if $F=E^{(1)}$ then $F/E$ has property $(*)$.

\begin{lemma}  \label {pf9u293jdnaKSFJ}
Let $\bk$ be an algebraically closed field and
let $F \in A = \kk2$ be a generally rational polynomial in $A$.
Then the function field $\Frac(A)/\bk(F)$ has property $(*)$.
\end{lemma}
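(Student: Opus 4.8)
The plan is to transport property~$(*)$ from a rational function field, where it is elementary, down to $\Frac A/\bk(F)$ through the tower built in the proof of~\ref{o9823ry91c83yrquwei}. Since $F$ is a generally rational polynomial, all the constructions of that proof are available; in particular we have the isomorphism $\phi$ of diagram~\eqref{9823r723txhdi2ed}, which restricts to an isomorphism $\bk(\tau) \to \bk(F)$ and to an isomorphism $L_0 \to \Frac A$. An isomorphism of function fields carries valuation rings to valuation rings and principal divisors to principal divisors, so $\phi$ induces a support-preserving bijection between $\proj(L_0/\bk(\tau))$ and $\proj(\Frac A/\bk(F))$. Hence it suffices to prove that $L_0/\bk(\tau)$ has property~$(*)$.

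Next I recall the relevant structure, writing $K$ for the field introduced in that proof ($K=\bk(\tau)$ if $\Char\bk=0$ and $K=\bk(\tau)^{p^{-\infty}}$ if $\Char\bk=p>0$). One has $\bk(\tau)\subseteq K\subseteq L$, with $K$ algebraically closed in $L$ by~\eqref{5923ueip23fnijq3}, $L$ purely inseparable over $L_0$ by~\eqref{2p93pc892h3pfued}, and $L=K^{(1)}$ by~\eqref{9f92039c32cruf}. Because $K/\bk(\tau)$ is algebraic and $L/L_0$ is purely inseparable, the map $\proj(L/K)\to\proj(L_0/\bk(\tau))$, $B\mapsto B\cap L_0$, is the bijection~\eqref{pf9u2p93pc1nudqwdas}. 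Finally, since $L=K^{(1)}$, the field $L/K$ has property~$(*)$ (the rational case noted just after Definition~\ref{9239f9cn8d3ndha}).

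Now I fix distinct $\Oeul_1,\Oeul_2\in\proj(L_0/\bk(\tau))$ and let $B_1,B_2\in\proj(L/K)$ be the distinct places with $B_i\cap L_0=\Oeul_i$. By property~$(*)$ for $L/K$, I choose $\eta\in L\setminus K$ with $\supp(\div\eta)=\{B_1,B_2\}$ in $\Div(L/K)$. Since $L$ is purely inseparable over $L_0$, there is $m\in\Nat$ with $\xi:=\eta^{q^m}\in L_0$, and I claim $\xi$ witnesses property~$(*)$ for $\Oeul_1,\Oeul_2$. The key point is that for any $B\in\proj(L/K)$ with image $\Oeul=B\cap L_0$, the valuation attached to $B$ has valuation ring $\Oeul$ on $L_0$, so for every $\zeta\in L_0$ the signs of $v_B(\zeta)$ and $v_\Oeul(\zeta)$ agree; in particular $v_\Oeul(\zeta)\neq 0\iff v_B(\zeta)\neq 0$. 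Applying this to $\zeta=\xi$ and using $v_B(\xi)=q^m v_B(\eta)$, I get $\Oeul\in\supp(\div\xi)\iff B\in\supp(\div\eta)$. Hence $\supp(\div\xi)$ is the image of $\{B_1,B_2\}$ under the bijection, namely $\{\Oeul_1,\Oeul_2\}$. This support is nonempty, so $\xi\notin\bk(\tau)$, and $\xi$ is the required element.

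The computation is routine once this scaffolding is in place; the only real content is that raising to the $q^m$-th power neither creates nor destroys zeros and poles, which is exactly what lets property~$(*)$ descend through the purely inseparable extension $L/L_0$. The step I would treat most carefully—and which I regard as the main (though mild) obstacle—is the compatibility of the place bijection~\eqref{pf9u2p93pc1nudqwdas} with divisor supports, i.e.\ checking that $B\mapsto B\cap L_0$ matches $\supp(\div_{L/K}\,\cdot\,)$ with $\supp(\div_{L_0/\bk(\tau)}\,\cdot\,)$ after taking the power. Everything else is bookkeeping with the already-established facts~\eqref{5923ueip23fnijq3}, \eqref{2p93pc892h3pfued}, \eqref{9f92039c32cruf} and the reduction to $L_0/\bk(\tau)$ via~\eqref{9823r723txhdi2ed}.
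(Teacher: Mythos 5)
Your proposal is correct and takes essentially the same route as the paper: both transport the problem to $L_0/\bk(\tau)$ via the isomorphism $\phi$ of \eqref{9823r723txhdi2ed}, invoke property $(*)$ for $L/K$ (valid since $L=K^{(1)}$ by \eqref{9f92039c32cruf}), and descend through the purely inseparable extension $L/L_0$ using the place bijection \eqref{pf9u2p93pc1nudqwdas} from \eqref{2p93pc892h3pfued}. The only difference is that you write out explicitly the $q^m$-th power and sign-of-valuation bookkeeping that the paper compresses into ``it easily follows.''
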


\begin{proof}
Let $F$ be a generally rational polynomial of $A$.
Then the facts established in the proof
that \ref{o9823ry91c83yrquwei}\eqref{982373253hhkj947} implies
\ref{o9823ry91c83yrquwei}\eqref{989kjbgcdsaw4567u9jdf} are valid here.
The notation being as in that proof,
consider the two function fields in one variable $L_0/\bk(\tau)$ and  $L/K$.
Since $L = K^{(1)}$ by \eqref{9f92039c32cruf}, $L/K$ has property $(*)$.
We noted in \eqref{2p93pc892h3pfued} and \eqref{pf9u2p93pc1nudqwdas} that $L/L_0$ is purely inseparable
and that the map $\proj(L/K) \to \proj(L_0/\bk(\tau))$, $\Oeul \mapsto \Oeul \cap L_0$, is bijective.
It easily follows that $L_0/\bk(\tau)$ has property $(*)$.
In view of the isomorphisms of \eqref{9823r723txhdi2ed}, we conclude that the
function field $\Frac(A)/\bk(F)$ has property $(*)$.
\end{proof}

\medskip
\begin{proof}[Proof of \ref{020399f09fjcnEH1283}.]
Let $F$ be a generally rational polynomial of $A$.
Once more, all facts established in the proof of
\ref{o9823ry91c83yrquwei}\eqref{982373253hhkj947} $\Rightarrow$ \ref{o9823ry91c83yrquwei}\eqref{989kjbgcdsaw4567u9jdf}
remain valid here.
Let $W=\Frac A$ and $\Aeul = S^{-1}A$ where $S = \bk[F] \setminus \{0\}$.

Consider the finite set $\Lambda = \setspec{ \lambda \in \bk }{\text{$F-\lambda$ is not irreducible in $A$}}$.
For each $\lambda \in \Lambda$, choose a prime factorization of $F-\lambda$ in $A$,
$F-\lambda = \prod_{j=1}^{n_\lambda} G_{\lambda,j}^{e_{\lambda,j}}$,
where the $G_{\lambda,j}$ are pairwise relatively prime irreducible elements of $A$,
and where $e_{\lambda,j}>0$ for all $\lambda,j$.
Note that $n_\lambda$ has the same meaning here as in the statement of the theorem.
Let $\Geul_\lambda = \{ G_{\lambda,1}, \dots, G_{\lambda,n_\lambda} \}$
and $\Geul = \bigcup_{\lambda \in \Lambda} \Geul_\lambda$.
Then the elements of $\Geul$ are pairwise relatively prime.

Note that $\Geul \subseteq \Aeul^*$; 
let $\langle \Geul \rangle$ be the subgroup of $\Aeul^*$ generated by $\Geul$
and $\langle F-\lambda : \lambda \in \Lambda \rangle$ the subgroup of 
$\langle \Geul \rangle$ generated by $\setspec{ F-\lambda }{ \lambda \in \Lambda }$.
Then $\langle \Geul \rangle$ and $\langle F-\lambda : \lambda \in \Lambda \rangle$ are free abelian groups of ranks
$|\Geul| = \sum_{\lambda \in \Lambda} n_\lambda$ and $|\Lambda|$ respectively.
Let $\phi : \langle \Geul \rangle \to \Aeul^*/\bk(F)^*$ be the composition 
$\langle \Geul \rangle \hookrightarrow \Aeul^* \xrightarrow{\pi} \Aeul^*/\bk(F)^*$ where $\pi$ is the canonical epimorphism. 
It is easy to see that each element of $\Aeul^*$ has the form $\alpha G$
for some $\alpha \in \bk(F)^*$ and some $G \in A$ where $G$ is a product of elements of $\Geul$.
So $\phi$ is surjective and consequently the abelian group  $\Aeul^*/\bk(F)^*$ is finitely generated.
Since, by \eqref{5923ueip23fnijq3} and \eqref{9823r723txhdi2ed}, $\bk(F)$ is algebraically closed in $W$,
it follows in particular that $\Aeul^*/\bk(F)^*$ 
is torsion-free; so $\Aeul^*/\bk(F)^*$ is a free abelian group of finite rank.
We leave it to the reader to
check that the kernel of $\phi$ is $\langle F-\lambda : \lambda \in \Lambda \rangle$. So
$$
1 \to \langle F-\lambda : \lambda \in \Lambda \rangle \to \langle \Geul \rangle \xrightarrow{\phi} \Aeul^*/\bk(F)^* \to 1
$$
is an exact sequence and it follows that the rank of  $\Aeul^*/\bk(F)^*$ is $|\Geul| - |\Lambda|$, i.e.,
\begin{equation} \label {09832oeuwhsdk}
\textstyle 
\text{$\Aeul^*/\bk(F)^*$ is a free abelian group of rank $\sum_{\lambda \in \bk} (n_\lambda-1)$.}
\end{equation}

Let $R_0, \dots, R_{t-1}$ be the distinct dicriticals of $F$, i.e., 
$$
\proj_\infty(\Aeul/\bk(F)) = \{ R_0, \dots, R_{t-1} \} .
$$
For each $i=0,\dots,t-1$, let $v_i : W^* \to \Integ$ be the valuation of $R_i$.
Since $W/\bk(F)$ has property $(*)$ by \ref{pf9u293jdnaKSFJ}, we may choose,
for each $i \in \{1, \dots, t-1\}$, an element $\xi_i$ of $W \setminus \bk(F)$ satisfying
$\supp( \div \xi_i) = \{ R_0, R_i\}$. Note that $\xi_i$ and $\xi_i^{-1}$ belong to 
$\bigcap \proj_{\text{\rm fin}}(\Aeul/\bk(F)) = \Aeul$, so $\xi_i \in \Aeul^*$.
Let $\langle \xi_1, \dots, \xi_{t-1} \rangle$ be the subgroup of $\Aeul^*$ generated by $\xi_1, \dots, \xi_{t-1}$
and let $\psi : \langle \xi_1, \dots, \xi_{t-1} \rangle \to \Aeul^*/\bk(F)^*$ be the composition
$\langle \xi_1, \dots, \xi_{t-1} \rangle \hookrightarrow \Aeul^* \xrightarrow{\pi} \Aeul^*/\bk(F)^*$.
To complete the proof, it's enough to prove:
\begin{equation} \label {7865e3gaxbk1p902}
\textstyle 
\begin{minipage}[t]{.9\textwidth}
$\langle \xi_1, \dots, \xi_{t-1} \rangle$ is free of rank $t-1$,
$\psi$ is injective and\ \ $\big( \Aeul^*/\bk(F)^* \big) / \image\psi$\ \ is torsion.
\end{minipage}
\end{equation}
Indeed, if this is true then the rank of $\Aeul^*/\bk(F)^*$ is equal to $t-1$,
so the desired equality follows from \eqref{09832oeuwhsdk}.

For each $i=1, \dots, t-1$, let $m_i = v_i(\xi_i) \in \Integ$ and note that $m_i \neq 0$.
Also note that $v_j(\xi_i) = 0$ for all choices of elements $i\neq j$ of $\{1, \dots, t-1\}$.

Suppose that $(k_1, \dots, k_{t-1}) \in \Integ^{t-1}$ is such that $\prod_{i=1}^{t-1} \xi_i^{k_i} \in \ker\psi$.
Then $\prod_{i=1}^{t-1} \xi_i^{k_i} \in \bk(F)^*$, so 
for each $j \in \{1, \dots, t-1\}$ we have
$0 = v_j( \prod_{i=1}^{t-1} \xi_i^{k_i} ) = k_j m_j$, so $k_j=0$.
This proves that 
$\langle \xi_1, \dots, \xi_{t-1} \rangle$ is free of rank $t-1$ and that $\psi$ is injective.

Let $u \in \Aeul^*$.  Choose $N>0$ so that $m_i \mid v_i( u^N )$ for all $i \in \{1, \dots, t-1\}$ and
define $(k_1, \dots, k_{t-1}) \in \Integ^{t-1}$ by $m_i k_i = v_i( u^N )$ for all $i \in \{1, \dots, t-1\}$;
let $\xi = \prod_{i=1}^{t-1} \xi_i^{k_i} \in \Aeul^*$.
Then the element $u^N\xi^{-1}$ of $\Aeul^*$ satisfies $v_i( u^N\xi^{-1} ) = 0$  for all $i \in \{1, \dots, t-1\}$.
We also have $\supp( \div (u^N\xi^{-1}) ) \subseteq \{R_0, \dots, R_{t-1} \}$, because $u^N\xi^{-1} \in \Aeul^*$.
So $\supp( \div (u^N\xi^{-1}) ) \subseteq \{R_0\}$ and hence $\div (u^N\xi^{-1}) = 0$.
Consequently,  $u^N\xi^{-1} \in \bk(F)^*$, so $\pi(u)^N = \psi(\xi)$.
This shows that $\big( \Aeul^*/\bk(F)^* \big) / \image\psi$ is torsion, which completes the
proof of \eqref{7865e3gaxbk1p902}. The theorem is proved.
\end{proof}

\medskip
\noindent{\bf Acknowledgements.}  The author wishes to thank Pierrette Cassou-Nogu\`es for suggesting some
of the questions that are answered in this paper, and Richard Ganong for discussing with him the question
of exotic lines mentioned near the end of Section~\ref{Sec:Definitionsandstatementsofresults}.


\end{document}